
\documentclass[12pt]{amsart}
\date{June~8, 2010}
\setlength{\textwidth}{16truecm}
\setlength{\textheight}{22.3truecm}
\usepackage{setspace}

\calclayout

\usepackage{color}
 \usepackage{amssymb}
 \usepackage[matrix,arrow,curve]{xy}

\newtheorem{thm}{anything}[section]

\newtheorem{Example}[thm]{Example}
\newtheorem{Examples}[thm]{Examples}

\newtheorem*{thma}{Theorem A}
\newtheorem*{thma-top}{Theorem B}

\newtheorem{Lemma}[thm]{Lemma}

\newtheorem{Corollary}[thm]{Corollary}
\newtheorem{Proposition}[thm]{Proposition}

\newtheorem{Remark}[thm]{Remark}

\theoremstyle{definition}
\newtheorem{definition}[thm]{Definition}
  \newtheorem{example}[thm]{Example}
  \newtheorem{ccote}[thm]{\hskip -0.6mm}
  \newtheorem{remark}[thm]{Remark}
   
     \newtheorem*{remarkB}{Remark}
    \newtheorem*{acknowledgement}{Acknowledgements}
  
\newcommand
{\eqncount}{\setcounter{equation}{\value{thm}}%
\addtocounter{thm}{1}}

\newcommand{\fl}[1]{\buildrel{#1}\over{\longrightarrow}}

\newcommand{\cqfd}{\unskip\kern 6pt\penalty 500
\raise -2pt\hbox{\vrule\vbox to10pt{\hrule width
 4pt\vfill\hrule}\vrule}\smallskip}

\newcommand{\proref}[1]{Proposition~\ref{#1}}
\newcommand{\remref}[1]{Remark~\ref{#1}}
\newcommand{\lemref}[1]{Lemma~\ref{#1}}
\newcommand{\corref}[1]{Corollary~\ref{#1}}

\newcommand{\exref}[1]{Example~\ref{#1}}

\newcommand{\secref}[1]{Section~\ref{#1}}
\DeclareMathOperator{\sign}{sign}
\DeclareMathOperator{\Sharp}{\#}
\newcommand{\la}{\langle}
\newcommand{\ra}{\rangle}
\DeclareMathOperator{\Image}{im}

\newcommand{\identity}{1}

\newcommand{\bbr}{{\mathbb{R}}}

\newcommand{\bbc}{{\mathbb{C}}}

\newcommand{\bbz}{{\mathbb{Z}}}
\newcommand{\bbq}{{\mathbb{Q}}}
\newcommand{\bbn}{{\mathbb{N}}}

\newcommand{\calc}{{\mathcal C}}

\newcommand{\caln}{{\mathcal N}}

\newcommand{\CP}{\bbc P}
\newcommand{\RP}{\bbr P}

\newcommand{\pcirc}{\circ}

\newcommand{\mun}{{-1}}

\newcommand{\onto}{\to\kern-7.5pt\to}
\newcommand{\donto}{\downarrow\kern -7.92pt\raisebox{-0.6ex}{$\downarrow$}}

\newcommand{\st}[1]{\vskip 2mm\noindent \textit{Step #1.} \hskip 1mm }

\newcommand{\scr}{\scriptscriptstyle}

\newcommand{\intb}{\,\raisebox{7.5pt}{$\scr\circ$}\kern -5pt}
\newcommand{\mancqfd}{\hfill \ensuremath{\Box}}

\renewcommand{\:}{\colon}

\newcommand{\hs}{$\bbz_2$-homology sphere}
\newcommand{\csp}{conjugation space}

\newcommand{\hfra}{$H^\ddstar$-frame}
\newcommand{\ef}{equivariantly formal}

\newcommand{\sq}{{\rm Sq}}
\newcommand{\ztk}{$\bbz_2$-knot}
\newcommand{\rp}{\bbr P^{2}}
\newcommand{\cp}{\bbc P^{2}}

\newcommand{\zloc}{\bbz_{(2)}}

\newcommand{\dstar}{\lower .8 pt \hbox{{$\displaystyle *$}}}
\newcommand{\ddstar}{{\lower 1.7 pt \hbox{{$\displaystyle *$}}}}

\begin{document}

\title{Conjugation spaces and  $4$-manifolds}
\author{Ian HAMBLETON}
\address{Department of Mathematics \& Statistics
\newline\indent
McMaster University
\newline\indent 
Hamilton, Ontario L8S 4K1, Canada}

\email{hambleton@mcmaster.ca }
\author{Jean-Claude HAUSMANN}
\address{Section de Math\'ematiques
\newline\indent
Universit\'e de Gen\`eve,
 B.P.~240
\newline\indent
CH-1211 Gen\`eve 24,
Switzerland}
\email{Jean-Claude.Hausmann@unige.ch}

\thanks{Research partially supported by NSERC Discovery Grant A4000. 
The first author would like  to thank the Max Planck Institut f\"ur Mathematik in Bonn.  Both authors thank the Swiss National Funds for Scientific Research for its support.}

\begin{abstract}\noindent
We show that  $4$-dimensional conjugation manifolds are all obtained from branched $2$-fold coverings of  knotted surfaces in $\bbz_2$-homology $4$-spheres. 
\end{abstract}

 \maketitle

\section{Introduction}
Flag manifolds $X$ with complex conjugation,
the Chevalley involution on coadjoint orbits of 
compact Lie groups,  and involutions  on toric manifolds or polygon spaces all share a remarkable  property. Let $G$ denote the group of order 2.
There is a ring isomorphism 
$$
\kappa\colon H^{2\dstar}(X;\bbz_2)\cong H^{\ddstar}(X^{G};\bbz_2)
$$
dividing the degrees in half, where $X^G$ denotes the fixed set under the involution.  The structure underlying this
 property was discovered by Hausmann, Holm and Puppe \cite{hausmann-holm-puppe1}, 
and studied further in \cite{franz-puppe1, hausmann-holm1, olbermann1,  olbermann2, olbermann3}. 
A $G$-space with this structure is a \emph{conjugation space} (see \secref{S.preli} for the precise definition). 
 
 \smallskip
In this paper, we study the interaction between conjugation structures and the topology of smooth $4$-manifolds.
A \textit{conjugation $4$-manifold} is a smooth closed $G$-manifold $X$ of dimension $4$ which is a conjugation
space. The fixed point set $X^{G}$ is  a closed connected  surface embedded in $X$.  In addition, $X$ has no odd-degree cohomology (${\rm mod\,}2)$, and hence
a conjugation manifold  is orientable.

Let $X$ be an oriented conjugation $4$-manifold. 
The quotient space $X/G$ inherits  a canonical smooth structure (see \lemref{smoothquo}),
and thus $X/G$ is an oriented
closed smooth $4$-manifold containing the surface $X^G$ as a smooth submanifold. 

A {\it \ztk\ } is a smooth manifold pair $(M,\Sigma)$, where $M$ is an oriented $4$-dimensional \hs\ and $\Sigma$ is a closed connected surface embedded in $M$.

\begin{thma}
The correspondence $X\mapsto (X/G,X^G)$ defines a bijection between
\renewcommand{\labelenumi}{(\alph{enumi})}
\begin{enumerate}
\item the orientation-preserving $G$-diffeomorphism classes of oriented
connected conjugation $4$-manifolds,
and
\item
the smooth equivalence classes of \ztk s.
\end{enumerate}
\end{thma}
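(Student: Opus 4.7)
\emph{Forward direction.} Given a conjugation 4-manifold $X$, Lemma~\ref{smoothquo} equips the quotient $M := X/G$ with a canonical smooth structure into which the fixed surface $X^G$ (closed and connected by the conjugation-space hypotheses) embeds smoothly. The projection $\pi\colon X\to M$ is then a 2-fold cover branched over $X^G$. To see that $M$ is a $\bbz_2$-homology $S^4$, I use the conjugation isomorphism $\kappa\colon H^{2*}(X;\bbz_2)\cong H^*(X^G;\bbz_2)$ to pin down the Betti numbers of $X$ (concentrated in even degrees) in terms of those of a closed surface, and then run the Smith long exact sequence for the involution on $X$; the contributions must fit into that sequence in a way that forces $H^*(M;\bbz_2)\cong H^*(S^4;\bbz_2)$. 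Orientability of $M$ is automatic because the involution is locally modelled on complex conjugation on $\bbc^2$ near $X^G$, which preserves orientations.

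\emph{Inverse direction.} Given a $\bbz_2$-knot $(M,\Sigma)$, define $X$ to be the 2-fold cover of $M$ branched along $\Sigma$. From the long exact sequence of $(M, M\setminus\Sigma)$ combined with the Thom isomorphism for the normal disk bundle of $\Sigma$, the vanishings $H^1(M;\bbz_2)=H^2(M;\bbz_2)=0$ force $H^1(M\setminus\Sigma;\bbz_2)\cong\bbz_2$; so there is a unique non-trivial double cover of $M\setminus\Sigma$ (the one with non-trivial meridian monodromy) and it extends canonically to a smooth branched cover $X\to M$ whose deck involution $\tau$ is smooth and satisfies $X^G\cong\Sigma$.

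\emph{Main obstacle: $X$ is a conjugation manifold.} A Mayer--Vietoris argument on the decomposition $X=\pi^{-1}(M\setminus\nu\Sigma)\cup\pi^{-1}(\nu\Sigma)$ (the first piece a free double cover of a $\bbz_2$-acyclic complement, the second a branched cover of a disk bundle over $\Sigma$), combined with the $\bbz_2$-sphere hypothesis on $M$, shows that $H^*(X;\bbz_2)$ matches $H^*(\Sigma;\bbz_2)$ with degrees doubled; in particular $H^{\mathrm{odd}}(X;\bbz_2)=0$. The more delicate task, and the real heart of the theorem, is to produce the conjugation frame in the sense of \cite{hausmann-holm-puppe1}: a degree-halving ring isomorphism $\kappa$ together with a compatible additive section $\sigma\colon H^{2*}(X;\bbz_2)\to H^{*}_{G}(X;\bbz_2)$. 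I would build $\kappa$ and $\sigma$ by a local-to-global argument: on a tubular neighbourhood of $X^G$ the involution is conjugate to the standard complex-conjugation action on the normal complex line bundle, so the universal conjugation frame on $\bbc P^\infty$ pulls back to provide the structure locally; Mayer--Vietoris together with the vanishing of the relevant cohomology of $M$ then allow these local data to be extended uniquely to all of $X$. Equivariant formality is then automatic from the vanishing of $H^{\mathrm{odd}}(X;\bbz_2)$ via the standard Smith-theoretic inequality.

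\emph{Bijection.} The two constructions are manifestly inverse: uniqueness of the branched cover recovers $X$ from $(X/G,X^G)$, and the quotient of the branched cover returns the pair $(M,\Sigma)$. Every orientation-preserving $G$-diffeomorphism of $X$ descends to a diffeomorphism of $(M,\Sigma)$, and conversely any diffeomorphism of $(M,\Sigma)$ lifts uniquely to an orientation-preserving $G$-diffeomorphism of the branched cover, yielding the claimed bijection on equivalence classes.
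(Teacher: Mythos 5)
Two steps of your plan would not survive contact with the details, and they are exactly the points where the paper has to work hardest. The first is the construction of the \hfra\ on the branched cover $\widehat M$. A conjugation structure is not local data that can be pulled back from a neighbourhood of the fixed set and glued: the invariant tube $V$ around $X^G=\Sigma$ equivariantly retracts onto $\Sigma$ with trivial involution, so it has non-zero odd cohomology whenever $b_1(\Sigma)>0$ and is not itself a \csp; moreover $H^2(V)\cong H^2(\Sigma)$ is one-dimensional, so no structure carried by the tube can encode the rank-$b_1(\Sigma)$ isomorphism $\kappa\:H^2(\widehat M)\to H^1(\Sigma)$, and there is no Mayer--Vietoris mechanism for ``extending frames''. (Also the normal bundle of $\Sigma$ need not be orientable --- e.g.\ $\RP^2\subset\CP^2$ --- so it is not a complex line bundle pulled back from $\CP^\infty$.) The paper's argument is genuinely global: once one knows $H^{odd}(\widehat M)=0$ and $b_2(\widehat M)=b_1(\Sigma)$, \proref{P.Cma4n} produces the frame by deleting an invariant disc around a fixed point, using the Smith inequality to force the relevant equalities, modifying an arbitrary additive section $\sigma$ so that $r\pcirc\sigma(a)=\kappa(a)u$, and re-attaching the disc as a conjugation cell via \cite[Prop.~5.1]{hausmann-holm-puppe1} (alternatively, Puppe's theorem as used in \proref{P.munH2}). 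Some device of this kind is needed; your local-to-global sketch as stated would fail.

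The second problem is the bijection step, which is not ``manifest'' in the smooth category because neither construction is functorial: a $G$-diffeomorphism $X\to X'$ descends only to a homeomorphism of quotients, which in general is not smooth along the image of the fixed set (\remref{R.nonfunct}), and a diffeomorphism of pairs $(M,\Sigma)\to(M',\Sigma')$ lifts to a homeomorphism of the branched covers that need not be smooth. So your claims that diffeomorphisms ``descend'' and ``lift uniquely'' as diffeomorphisms are false as stated; well-definedness of the two maps on equivalence classes is precisely the content of \lemref{smoothquoUNI} and \lemref{L.Unibran}, proved via metric-independence of the quotient structure and via pulling back the branched cover and invoking the uniqueness in \lemref{P.Exibran}, not by smoothing the induced map. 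Finally, your two central cohomological claims are asserted rather than proved: that the Smith sequence ``forces'' $H^\ddstar(X/G)\approx H^\ddstar(S^4)$, and that an ordinary Mayer--Vietoris argument shows $H^\ddstar(\widehat M)$ is $H^\ddstar(\Sigma)$ with degrees doubled. Neither follows from Betti numbers alone (in the second case $H^1$ of the double cover of the knot complement is not determined by $H^\ddstar(L)$ without knowing cup products); the paper gets both from equivariant cohomology --- injectivity of $H^\ddstar_G(X)\to H^\ddstar_G(X^G)$ (\proref{P.ef}) in one direction, and the Borel fibration with the section coming from a fixed point to prove $H^1(\widehat M)=0$ in the other --- combined with the Euler-characteristic identity $\chi(X)+\chi(\Sigma)=2\chi(X/G)$. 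Your Smith-theoretic route may be completable, but it requires identifying the maps in those sequences, which your outline does not do.
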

Two \ztk s $(M,\Sigma)$ and $(M',\Sigma')$ are {\it smoothly equivalent}
if there is an orientation-preserving diffeomorphism $h\:M\to M'$
such that $h(\Sigma)=\Sigma'$.

The inverse of the bijection in Theorem A is provided by taking a branched $2$-fold covering  of $M$ over the knot $\Sigma$. We therefore
 need to understand the relation between smooth manifold structures on the total spaces and quotients of branched $2$-fold coverings, with codimension two branch locus (see the Appendix \S \ref{S.bran}).
 Other versions of Theorem~A are given in \secref{S.top}, for instance for topological manifolds (Theorem B),  or for non-oriented manifolds.

\smallskip
Under the bijection of Theorem~A, any knot $S^2\hookrightarrow S^4$ corresponds to a conjugation
$4$-manifold $X$ with $X^G\approx S^2$. 
For the trivial knot $S^2\subset S^4$, $X$ is the sphere $S^4$ on which $G$ acts by a  
linear involution with $2$ negative eigenvalues (see 
\exref{E.TrivKnot}). In general, $X$ is not simply connected. 
On the other hand, Gordon \cite{gordon1}, \cite{gordon2} and Sumners \cite{sumners1} found
infinitely many topologically distinct knots in $S^4$
which are the fixed point set of smooth involutions
(contrasting with the Smith conjecture in dimension $3$), and earlier examples on homotopy
$4$-spheres were found by Giffen \cite{giffen1}. Our work adds a new perspective: the examples of 
Gordon and Sumners produce infinitely many
topologically inequivalent smooth conjugations on $S^4$ (see \secref{S.pi1}).

The classical examples of conjugation $4$-manifolds come from the complex conjugations on 
$S^2\times S^2$, with fixed point set  $S^1\times S^1$,  and on $\CP^2$ (or $\overline{\CP}^2$),  with fixed point set 
$\RP^2$. By taking connected sums along the fixed sets, one can thus realize any closed surface as the fixed point set of a conjugation $4$-manifold. These classical examples all have quotient a smooth manifold diffeomorphic to $S^4$ (see Arnold \cite{arnold1}, Kuiper \cite{kuiper1}, Letizia \cite{letizia1}, and  Massey \cite{massey3}). 
 For the reader's convenience, in \proref{classic1} we include a proof using classification results for group actions by Bredon \cite{bredon1} and Orlik-Raymond \cite{orlik-raymond1} (but not the deep results of Freedman \cite{freedman1} or Cerf \cite{cerf1}). 
 
If $X$ is any simply-connected conjugation
$4$-manifold, we prove in \proref{1conncon-bis} that $X/G$ is at least homeomorphic to $S^4$. 
In addition, we show in \proref{P.spinStan} that $X$  is homeomorphic to a connected sum of copies of $S^2\times S^2$, $\CP^2$, and $\overline{\CP}^2$
(but not necessarily equivariantly).  For example, the K3 surface  does not admit a conjugation structure.
\begin{remarkB} A  conjugation $4$-manifold $X$ is \emph{equivariantly minimal}  among $G$-actions on $4$-manifolds with a given surface as the fixed set, since $X$ can not be decomposed as a non-trivial equivariant connected sum in the free part of the $G$-action (see Proposition \ref{P.munH2}).
\end{remarkB}
For the remainder of the paper, the cohomology 
$H^\ddstar(-)=H^\ddstar(-;\bbz_2)$ is taken with
coefficients in the field  $\bbz_2$, unless otherwise mentioned. The letter $G$ stands
for the group of order $2$, with $G=\{\identity,\tau\}$, and a $G$-space is a space together
with an involution $\tau$.

\begin{acknowledgement}
The authors would like to thank Allan Edmonds,   Ron Fintushel, Cameron Gordon,  Slava Kharlamov, Volker Puppe, Ron Stern and Claude Weber for helpful conversations and correspondence, and the referee for
valuable suggestions. 
\end{acknowledgement}

\section{Conjugation spaces and manifolds}\label{S.preli}
For a $G$-space $X$, the 
equivariant cohomology $H^\ddstar_G(X)$ is defined as the (singular) cohomology of
the Borel construction:
$$
H^\ddstar_G(X)=H^\ddstar(X\times_G EG) \, .
$$
Hence, $H^\ddstar_G(X)$ is a $H^\ddstar(BG)$-algebra via the projection $X\times_G EG \to BG$.
Since $G$ is the group of order two, $BG=\RP^\infty$ and
$H^\ddstar(BG)=\bbz_2[u]$, with $u$ in degree $1$. Thus
$H^\ddstar_G(X)$ is a $\bbz_2[u]$-algebra.
Let $\rho\: H^{\ddstar}_G(X)\to H^{\ddstar}(X)$ and
$r\: H^{\ddstar}_G(X)\to H^{\ddstar}_G(X^G)$ be the restriction homomorphisms.
As $G$ acts trivially on $X^G$, one has $(X^G)_G=BG\times X^G$,
whence a canonical ring isomorphism $H^\ddstar_G(X^G)=H^\ddstar(X^G)[u]$.

\begin{ccote} \textbf{Conjugation spaces} (\cite{hausmann-holm-puppe1}). 
A {\it cohomology frame} or {\it \hfra}\
for a $G$-space $X$ is a pair $(\kappa,\sigma)$, where
\renewcommand{\labelenumi}{(\alph{enumi})}
\begin{enumerate}
\item $\kappa\:  H^{2m}(X)\to H^{m}(X^G)$, $m \geq 0$, is
an additive isomorphism dividing the degrees in half,  and
\item $\sigma\:  H^{2m}(X)\to H^{2m}_G(X)$, $m\geq 0$, is
an additive section of $\rho$.
\end{enumerate}
Moreover, $\kappa$ and $\sigma$ must satisfy the {\it conjugation equation}
\eqncount
\begin{equation}\label{defalceq}
r\pcirc\sigma(a) = \kappa(a)u^m + \ell_m(u)
\end{equation}
for all $a\in H^{2m}(X)$ and all $m\in\bbn$, where $\ell_m(u)$
denotes any polynomial in the variable $u$ of degree less than $m$.
An involution admitting a \hfra\ is called a {\it conjugation}.
A $G$-space $X$ such that $H^{odd}(X)=0$ and admitting an \hfra\ is called a 
{\it \csp}.

Here below are some important properties of \csp s.

\begin{enumerate}
\item If $(\kappa,\sigma)$ is \hfra, then $\kappa$ and $\sigma$ are ring homomorphisms \cite[Theorem~3.3]{hausmann-holm-puppe1}.
The ring homomorphism $\kappa$ also commutes with the Steenrod squares:
$\kappa\pcirc\sq^{2i}=\sq^i\pcirc\kappa$, \cite[Theorem~1.3]{franz-puppe1}.
\item \hfra s are natural for $\tau$-equivariant maps \cite[Prop.~3.11]{hausmann-holm-puppe1}. In particular, if an involution
admits an \hfra, it is unique \cite[Cor.~3.12]{hausmann-holm-puppe1}.
\item For a conjugate-equivariant complex vector bundle $\eta$ (``real bundle''
in the sense of Atiyah) over a \csp\ $X$, the isomorphism $\kappa$
sends the total Chern class of $\eta$ onto the total Stiefel-Whitney class of
its fixed bundle.
\end{enumerate}
\end{ccote}

\begin{ccote}\label{eqform} \textbf{Equivariantly formal spaces}.
A $G$-space $X$ is {\it \ef\ (over $\bbz_2$)} if
the restriction homomorphism $\rho\:H^\ddstar_G(X)\to H^\ddstar(X)$ is surjective.
For instance, a \csp\ is \ef.
The following result is proved in \cite[Prop.~1.3.14]{allday-puppe1}.

\begin{Proposition}\label{P.ef}
Let $X$ be finite dimensional $G$-CW-complex with $\sum b_i(X)$ finite, where $b_i(-)=\dim H^i(-)$. The following statements are
equivalent.
\begin{enumerate}
\item $X$ is \ef. 
\item $\sum b_i(X) = \sum b_i(X^G)$.
\item The restriction homomorphism $H^\ddstar_G(X)\to H^\ddstar_G(X^G)$ is injective.
\end{enumerate}
\end{Proposition}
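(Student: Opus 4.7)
The plan is to use the Serre spectral sequence of the Borel fibration $X \hookrightarrow X \times_G EG \to BG$ combined with Borel's localization theorem. Write $S = \bbz_2[u]$ and let $S_0 = \bbz_2[u, u^{-1}]$ denote its localization at $\{u^k\}$, so that both $H^\ddstar_G(X)$ and $H^\ddstar_G(X^G) = H^\ddstar(X^G)[u]$ are $S$-modules.

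The first step is to identify equivariant formality (1) with the collapse of this spectral sequence at $E_2$. Since $\rho$ is the edge homomorphism $H^\ddstar_G(X) \twoheadrightarrow E_\infty^{0,\dstar} \hookrightarrow E_2^{0,\dstar} \subseteq H^\ddstar(X)$, its surjectivity forces $E_\infty^{0,\dstar} = E_2^{0,\dstar} = H^\ddstar(X)$; in particular $G$ acts trivially on $H^\ddstar(X)$, so $E_2 \cong S \otimes H^\ddstar(X)$, and the $S$-linearity of the differentials (together with $u$ being a permanent cycle) propagates the vanishing on the first column to every $d_r$.

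Next I would apply Borel's localization theorem: the inclusion $X^G \hookrightarrow X$ induces an isomorphism
\[
S_0 \otimes_S H^\ddstar_G(X) \xrightarrow{\cong} S_0 \otimes_S H^\ddstar_G(X^G) \cong S_0 \otimes_{\bbz_2} H^\ddstar(X^G),
\]
whose target has $S_0$-rank $\sum b_i(X^G)$. On the other hand, the spectral sequence bounds this quantity above by $\sum b_i(X)$, with equality precisely when every $d_r$ vanishes. Combined with the first step, this proves (1)$\Leftrightarrow$(2).

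Finally, for (1)$\Leftrightarrow$(3): when the spectral sequence collapses, a standard argument shows that $H^\ddstar_G(X)$ is a free $S$-module, hence $u$-torsion-free; since $\ker r$ is $u$-torsion by Borel's theorem, $r$ is injective. Conversely, injectivity of $r$ embeds $H^\ddstar_G(X)$ in the $S$-free module $H^\ddstar(X^G)[u]$, so it is torsion-free, and the standard argument then forces the spectral sequence to collapse. The main technical obstacle I expect is the rank comparison in the second step: one must verify carefully that the $S_0$-rank of $H^\ddstar_G(X)$ agrees with the $S_0$-rank of $E_\infty$, which drops by twice the total rank of the differentials.
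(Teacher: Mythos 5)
You should first note that the paper itself gives no proof of \proref{P.ef}: it quotes the statement from Allday--Puppe [Prop.~1.3.14], so there is no in-paper argument to compare with. Your outline follows the standard Borel-localization/Serre spectral sequence route, and most of it is sound, modulo two fixable points in the proof of (1)$\Leftrightarrow$(2): for the direction (2)$\Rightarrow$(1) you may not assume the action on $H^\ddstar(X)$ is trivial, so the upper bound must be obtained from the local-coefficient $E_2$-term, using that $\dim H^p(G;H^q(X))$ for $p\geq 1$ equals the number of trivial $\bbz_2[G]$-summands of $H^q(X)$, hence is at most $b_q(X)$ with equality exactly for trivial action; and equality of $\bbz_2[u,u^{-1}]$-ranks only forces each $d_r$ to have $u$-torsion image, not to vanish --- you recover genuine vanishing because, once the action is trivial, $E_2$ (and inductively each page) is a free $\bbz_2[u]$-module, and a torsion submodule of a free module is zero. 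These are exactly the kind of bookkeeping issues you flag at the end, and they can be handled.

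The genuine gap is the last step of (3)$\Rightarrow$(1): ``$H^\ddstar_G(X)$ is torsion-free, and the standard argument then forces the spectral sequence to collapse.'' That implication (torsion-free, equivalently free, over $\bbz_2[u]$ implies equivariant formality) is precisely the nontrivial content at this point, and nothing you have set up delivers it: your rank comparison only returns the Smith inequality $\sum b_i(X^G)\leq\sum b_i(X)$, and freeness of $H^\ddstar_G(X)$ does not by itself visibly force triviality of the action on $H^\ddstar(X)$ or vanishing of differentials, so invoking ``the standard argument'' is circular in a proof of this very proposition. The standard way to close the gap for $G$ of order $2$ with $\bbz_2$-coefficients is the exact sequence of the double cover $X\simeq X\times EG\to X\times_G EG$ (equivalently, the long exact sequence induced by $0\to\bbz_2\to\bbz_2[G]\to\bbz_2\to 0$):
\[
\cdots \to H^n(X)\to H^n_G(X)\ \fl{\ \cup u\ }\ H^{n+1}_G(X)\ \fl{\ \rho\ }\ H^{n+1}(X)\to\cdots
\]
which shows that $\rho$ is surjective in all degrees if and only if cup product with $u$ is injective on $H^\ddstar_G(X)$, i.e.\ if and only if $H^\ddstar_G(X)$ has no $u$-torsion. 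Since, as you observe, $\ker r$ is contained in the $u$-torsion (localization) and conversely every torsion element dies in the free module $H^\ddstar(X^G)[u]$, one gets $\ker r=\{u\hbox{-torsion}\}$, and the displayed sequence then yields (1)$\Leftrightarrow$(3) directly, replacing the unsupported step (and even making the Leray--Hirsch freeness argument in (1)$\Rightarrow$(3) unnecessary).
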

\end{ccote}

\begin{remark} \label{R.finGsp}
A smooth $G$-manifold has the equivariant homotopy type of a finite $G$-CW complex \cite{illman1}.
The assumptions of Proposition \ref{P.ef} are also satisfied for $X$ a closed topological manifold with a \textit{locally smooth} $G$-action. See Kwasik \cite{kwasik1} for this statement and further references. 
\end{remark}

Here is a consequence of \proref{P.ef}.

\begin{Proposition}\label{L.conjSn}
Let $X$ be a finite $G$-CW-complex. Suppose that $H^\ddstar(X)\approx H^\ddstar(S^{2n})$ and 
$H^\ddstar(X^G)\approx H^\ddstar(S^{n})$. Then $X$ is a conjugation space.
\end{Proposition}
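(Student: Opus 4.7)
The plan is to exhibit an explicit $H^\ddstar$-frame $(\kappa,\sigma)$ on $X$. The hypothesis on cohomology gives $\sum b_i(X)=2=\sum b_i(X^G)$, so \proref{P.ef} applies and $X$ is equivariantly formal; in particular $\rho\:H^\ddstar_G(X)\to H^\ddstar(X)$ is surjective and, by item (3) of the proposition, the restriction $r\:H^\ddstar_G(X)\to H^\ddstar_G(X^G)=H^\ddstar(X^G)[u]$ is injective. Since $H^\ddstar(X)\approx H^\ddstar(S^{2n})$ has no odd-degree classes, the only thing left is to construct the frame.

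The map $\kappa$ is forced: it is the identity in degree $0$, the unique nonzero isomorphism $H^{2n}(X)\to H^n(X^G)$ in degree $2n$, and zero in all other degrees. For the section $\sigma$ I would set $\sigma(1)=1$ (which takes care of the conjugation equation in degree $0$) and, in degree $2n$, first pick any lift $\tilde a\in H^{2n}_G(X)$ of a generator $a\in H^{2n}(X)$ (which exists by equivariant formality). Since $H^j(X^G)=0$ for $j\neq 0,n$, the group $H^{2n}_G(X^G)$ has $\bbz_2$-basis $\{\alpha u^n,\ u^{2n}\}$, where $\alpha$ generates $H^n(X^G)$, and one can write
\[
r(\tilde a)=c_1\alpha u^n+c_2 u^{2n},\qquad c_1,c_2\in\bbz_2.
\]
I then set $\sigma(a):=\tilde a-c_2 u^{2n}$, so that $r(\sigma(a))=c_1\alpha u^n$.

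The heart of the argument is to show $c_1=1$. If instead $c_1=0$, then $r(\sigma(a))=0$, so by injectivity of $r$ we would have $\sigma(a)=0$, contradicting $\rho(\sigma(a))=a\neq 0$. Hence $c_1=1$ and $r(\sigma(a))=\alpha u^n=\kappa(a)u^n$. Finally the error term $\ell_n(u)$ allowed in the conjugation equation (\ref{defalceq}) is $\sum_{j<n}b_j u^j$ with $b_j\in H^{2n-j}(X^G)$; because $H^i(X^G)=0$ for $0<i<n$, every such $b_j$ is forced to vanish, and the conjugation equation is satisfied with $\ell_n(u)=0$. The additive section and isomorphism properties are automatic from the construction.

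The main (minor) obstacle is really bookkeeping: one has to check that the unique candidates for $\kappa$ and $\sigma$ are compatible with the conjugation equation. Both points—the vanishing of the correction term $\ell_n$ and the nonvanishing of the coefficient $c_1$—follow from the sparsity of $H^\ddstar(X^G)$ together with the injectivity of $r$, which itself comes from equivariant formality via \proref{P.ef}.
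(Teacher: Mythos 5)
Your proof is correct and follows essentially the same route as the paper: apply \proref{P.ef} to get equivariant formality and injectivity of $r$, lift the generator of $H^{2n}(X)$, subtract the $u^{2n}$-term, and use the sparsity of $H^\ddstar(X^G)$ to get the conjugation equation. You even spell out explicitly (via injectivity of $r$ and $\rho(u^{2n})=0$) why the coefficient of $\kappa(a)u^n$ must be nonzero, a point the paper's proof leaves implicit.
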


\begin{proof}
By \proref{P.ef}, $X$ is \ef\ and the restriction homomorphism $r\: H^\ddstar_G(X)\to H^\ddstar_G(X^G)$ is injective.
Let $a\in H^{2n}(X)$ and $b\in H^n(X^G)$ be the generators. Let 
$\sigma\:H^{2n}(X)\to H^{2n}_G(X)$ be a section of $\rho\:H^{2n}_G(X)\to H^{2n}(X)$. 
Since $H^{2n}_G(X^G)$ is generated by $bu^n$ and $u^{2n}$,
one has $r\pcirc\sigma=\lambda_1bu^n + \lambda_2u^{2n}$. Setting $\sigma'(a)=\sigma(a)+\lambda_2u^{2n}$
produces a new section $\sigma'$ with $r\pcirc\sigma'(a)=b u^n$. Hence, $X$ is a conjugation space. 
\end{proof}

\begin{ccote} \textbf{Conjugation manifolds}. 
A \textit{conjugation manifold} is a smooth closed manifold equipped with a smooth involution
which is  a conjugation. As $H^{odd}(X)=0$, $X$ must be orientable and of dimension~$2n$. The fixed point set $X^G$ is a closed smooth manifold of dimension~$n$.  Also, the involution of a conjugation manifold preserves connected components (see \cite[Remark~3.1]{hausmann-holm-puppe1}),
so one can restrict to connected manifolds. 

There are natural questions in all dimensions:
\begin{enumerate}
\renewcommand{\labelenumi}{(\roman{enumi})}
\item\label{question1} \emph{Given a closed connected smooth manifold $M^n$, does there exists a conjugation $2n$-manifold $X$ with $X^G$ diffeomorphic to $M$ ?}
\item \emph{Given a closed connected smooth $2n$-manifold $X$, does $X$ admit a 
smooth conjugation structure ?}
\item \emph{How can one classify conjugation manifolds up to $G$-diffeomorphism ?}
\end{enumerate}
Theorem A provides an answer for question (iii) in dimension $4$. The remainder of this section contains some partial results on questions (i) and (ii).
\begin{remark}\label{rem: cases} 
The circle is the fixed point of a unique conjugation $2$-manifold,
namely $S^2$ with a reflection through the equator. 
The uniqueness comes from the classical result that a continuous involution on $S^2$
is topologically conjugate to a linear one, see Constantin and Kolev \cite[Theorem 4.1]{constantin-kolev1}.
For a smooth conjugation, the uniqueness follows from the smooth Sch\"onflies theorem. 
The work of Olbermann \cite{olbermann1,  olbermann2, olbermann3} addresses  questions (i)-(iii) for $6$-manifolds.  

We have already seen that any closed surface can be the fixed point set of a conjugation manifold. However, the answer to question (i) can be negative without further assumptions on $M^{n}$, for $n>2$. For example, W.~Pitsch  and J.~Scherer observed that the Cayley projective plane is
a closed 16-dimensional manifold \cite[Theorem~7.21, p.~707]{whitehead-gw2}, 
which can  not be the fixed point set of any conjugation space. Indeed, a famous theorem
of Adams and its proof \cite[Theorem~1.1.1]{adams0} shows that
$\bbz_2[x]/(x^3)$ is not the $\bbz_2$-cohomology ring of a space if ${\rm degree\,}(x)>8$.
\end{remark}

If $X$ is a manifold, we denote by $v_i(X)$ and $w_i(X)$ in $H^{i}(X)$ 
its  Wu and Stiefel-Whitney classes. \proref{P.WuSW} below and its corollaries were also noticed by Pitsch and Scherer. 

\begin{Proposition}\label{P.WuSW}
Let $X$ be a smooth conjugation manifold of dimension $2n$, with \hfra\ $(\kappa,\sigma)$.
Then $\kappa(v_{2i}(X))=v_i(X^G)$ and $\kappa(w_{2i}(X))=w_i(X^G)$.
\end{Proposition}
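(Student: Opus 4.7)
The proof should be essentially formal given the properties of $\kappa$ recalled in the excerpt. The plan is to establish the Wu class identity $\kappa(v_{2i}(X))=v_i(X^G)$ first, using the characterization of Wu classes together with the commutation $\sq^i\circ\kappa=\kappa\circ\sq^{2i}$, and then to deduce the Stiefel--Whitney identity from Wu's formula $w=\sq(v)$.

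For the Wu class identity, recall that $v_i(M)$ is characterized by $\langle v_i(M)\cup y,[M]\rangle=\langle \sq^i(y),[M]\rangle$ for all $y\in H^{\dim M-i}(M)$. For any $y\in H^{n-i}(X^G)$, I would write $y=\kappa(b)$ for the unique $b\in H^{2(n-i)}(X)$, and transfer the Wu relation for $X^G$ back to the Wu relation for $X$ via $\kappa$. Two ingredients are needed: the intertwining of $\kappa$ with Steenrod squares (given in the excerpt), and the identity $\langle a,[X]\rangle=\langle \kappa(a),[X^G]\rangle$ for $a\in H^{2n}(X)$. The latter is the only new point; it holds because $X^G$ is a closed connected $n$-manifold (connectedness follows from $\kappa$ being an isomorphism in degree zero), so both $H^{2n}(X)$ and $H^n(X^G)$ are copies of $\bbz_2$, and $\kappa$, being a ring isomorphism, must carry the Poincar\'e dual of one fundamental class to that of the other.

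With these ingredients the calculation is routine: starting from $\langle v_i(X^G)\cup y,[X^G]\rangle=\langle \sq^i\kappa(b),[X^G]\rangle$, one rewrites $\sq^i\kappa(b)=\kappa(\sq^{2i}b)$, passes to $X$ via the top-class identification to obtain $\langle \sq^{2i}b,[X]\rangle=\langle v_{2i}(X)\cup b,[X]\rangle$, and then passes back through $\kappa$ (a ring homomorphism) and matches top classes once more to reach $\langle \kappa(v_{2i}(X))\cup y,[X^G]\rangle$. As this holds for every $y$, the uniqueness of the Wu class on $X^G$ forces $\kappa(v_{2i}(X))=v_i(X^G)$.

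For the Stiefel--Whitney part, Wu's formula $w_{2i}(X)=\sum_j\sq^j(v_{2i-j}(X))$ combined with $H^{\mathrm{odd}}(X)=0$ (which kills all terms with $j$ odd) and the already established Wu class identity, together with $\kappa\sq^{2\ell}=\sq^\ell\kappa$, collapses the image under $\kappa$ to $\sum_\ell\sq^\ell(v_{i-\ell}(X^G))=w_i(X^G)$. I do not anticipate any serious obstacle; the only content beyond formal manipulation is the fundamental class compatibility noted above.
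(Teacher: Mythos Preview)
Your proof is correct and follows essentially the same route as the paper: both use the Wu-class characterization, the ring property of $\kappa$, the intertwining $\kappa\circ\sq^{2i}=\sq^i\circ\kappa$, and then deduce the Stiefel--Whitney identity from Wu's formula with the odd terms suppressed. The paper's version is marginally more direct in that it applies $\kappa$ straight to the equation $v_{2i}(X)\smile a=\sq^{2i}(a)$ in $H^{2n}(X)$ and uses bijectivity of $\kappa$, so your detour through pairing with fundamental classes (and the attendant connectedness remark) is unnecessary, though not wrong.
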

 
\begin{proof}
The Wu class $v_{2i}(X)$ is characterised by the equation
\eqncount
\begin{equation}\label{E.defvi}
 v_{2i}(X) \smile a = \sq^{2i}(a)  \ \hbox{ for all } a\in H^{2n-2i}(X) \, .
\end{equation}
The ring isomorphism $\kappa\:H^{2\dstar}(x)\to H^\ddstar(X^G)$ satisfying 
$\kappa\pcirc\sq^{2i}=\sq^i\pcirc\kappa$, \cite[Theorem~1.3]{franz-puppe1}.
Applying $\kappa$ to~\eqref{E.defvi} thus gives
\eqncount
\begin{equation}\label{E.kappavi}
 \kappa(v_{2i}(X)) \smile \kappa(a) = \sq^{i}(\kappa(a))  \ 
\hbox{ for all } a\in H^{2n-2i}(X) \, .
\end{equation}
As $\kappa$ is bijective, \eqref{E.kappavi} implies that 
$$
 \kappa(v_{2i}(X)) \smile b = \sq^{i}(b)  \ 
\hbox{ for all } b\in H^{n-i}(X^G) \, ,
$$
which implies that $\kappa(v_{2i}(X))=v_i(X^G)$. As $H^{odd}(X)=0$, 
the Wu formula says that
\eqncount
\begin{equation}\label{E.WuFor}
w_{2i}(X) = \sum_{k=1}^i \sq^{2i-2k}\,  v_{2k}(X) \, .
\end{equation}
Applying $\kappa$ to \eqref{E.WuFor} and using that $\kappa(v_{2i}(X))=v_i(X^G)$,
we get 
$$
\kappa(w_{2i}(X)) = \sum_{k=1}^i \sq^{i-k}\, \kappa(v_{2k}(X)) = 
\sum_{k=1}^i \sq^{i-k} \, v_{k}(X^G) \, .
$$
By the Wu formula for $X^G$, this implies that $\kappa(w_{2i}(X))=w_i(X^G)$.
\end{proof}

The following corollary may be compared with \cite[Theorem~3]{edmonds0}. Note that,
if a conjugation manifold $X$ is spin, it has a unique spin structure
since $H^1(X)=0$. 

\begin{Corollary}\label{C.VuSW}
Let $X$ be conjugation manifold of dimension $2n$. 
Then $X$ is spin if and only if $X^G$ is orientable.
\end{Corollary}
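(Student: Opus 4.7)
The plan is to derive the corollary directly from \proref{P.WuSW} applied to the lowest nontrivial degree, using that the conjugation isomorphism $\kappa$ is, in particular, injective.

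First I would recall that for any conjugation manifold $X$ we have $H^{odd}(X)=0$, so $H^1(X)=0$ and therefore $w_1(X)=0$; consequently $X$ is automatically orientable, and being spin is equivalent to the single condition $w_2(X)=0$. Similarly, $X^G$ is orientable if and only if $w_1(X^G)=0$.

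Next I would specialize the identity $\kappa(w_{2i}(X))=w_i(X^G)$ from \proref{P.WuSW} to the case $i=1$, which yields
\[
\kappa(w_2(X)) = w_1(X^G).
\]
Since $\kappa\colon H^2(X)\to H^1(X^G)$ is an additive isomorphism (part of the \hfra\ data), it is in particular injective, so $\kappa(w_2(X))=0$ if and only if $w_2(X)=0$. Combining this with the two reductions above gives
\[
X \text{ is spin} \iff w_2(X)=0 \iff w_1(X^G)=0 \iff X^G \text{ is orientable},
\]
which is the desired equivalence.

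There is really no obstacle here: the whole argument is a one-line consequence of \proref{P.WuSW} once one notices that $w_1(X)$ vanishes for free and that $\kappa$ is bijective. The only small point to keep track of is the correct degree matching (a class in $H^{2i}(X)$ corresponds under $\kappa$ to one in $H^i(X^G)$), which is exactly what makes $w_2(X)$ land opposite to $w_1(X^G)$.
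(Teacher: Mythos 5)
Your proof is correct and follows essentially the same route as the paper: since $H^{odd}(X)=0$ forces $w_1(X)=0$, being spin reduces to $w_2(X)=0$, and the case $i=1$ of \proref{P.WuSW} together with the bijectivity of $\kappa$ gives $w_2(X)=0 \iff w_1(X^G)=0$.
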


\begin{proof}
As $H^{odd}(X)=0$, $X$ is spin if and only if $w_2(X)=0$. The results thus follows 
from \proref{P.WuSW}.
\end{proof}

Another corollary concerns non-oriented bordism.

\begin{Corollary}\label{C.2VuSW}
Let $X$ be a conjugation manifold. Then $X$ bounds a compact (possibly non-oriented) manifold if
and only if $X^G$ does so.
\end{Corollary}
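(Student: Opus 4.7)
The plan is to invoke Thom's theorem on unoriented bordism: a closed manifold $M$ bounds a compact (possibly non-oriented) manifold if and only if every (mod $2$) Stiefel-Whitney number of $M$ vanishes. Thus it suffices to establish a bijection between the Stiefel-Whitney numbers of $X$ and those of $X^G$, exploiting the identity $\kappa(w_{2i}(X))=w_i(X^G)$ furnished by \proref{P.WuSW}.

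We may assume $X$ is connected, so $\kappa\colon H^0(X)\to H^0(X^G)$ forces $X^G$ to be connected as well. Write $\dim X=2n$ and $\dim X^G=n$. Since $H^{odd}(X)=0$, every odd-degree Stiefel-Whitney class of $X$ vanishes, and therefore any potentially nonzero Stiefel-Whitney number of $X$ has the form
$$
s_I(X)=\langle w_{2i_1}(X)\smile\cdots\smile w_{2i_k}(X),[X]_2\rangle,
$$
where $I=(i_1,\ldots,i_k)$ runs through the partitions of $n$. The Stiefel-Whitney numbers of the $n$-manifold $X^G$ are indexed by the same set of partitions, namely
$$
s_I(X^G)=\langle w_{i_1}(X^G)\smile\cdots\smile w_{i_k}(X^G),[X^G]_2\rangle.
$$

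Next I would apply the ring isomorphism $\kappa$ combined with \proref{P.WuSW} to get
$$
\kappa\bigl(w_{2i_1}(X)\smile\cdots\smile w_{2i_k}(X)\bigr)=w_{i_1}(X^G)\smile\cdots\smile w_{i_k}(X^G).
$$
Both $H^{2n}(X)\cong\bbz_2$ and $H^n(X^G)\cong\bbz_2$ are one-dimensional, so the restriction of $\kappa$ in top degree sends the generator to the generator and therefore preserves evaluation on the fundamental class: $\langle a,[X]_2\rangle=\langle\kappa(a),[X^G]_2\rangle$ for every $a\in H^{2n}(X)$. Hence $s_I(X)=s_I(X^G)$ for every partition $I$ of $n$, and Thom's criterion delivers the equivalence. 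The only conceptually delicate point is the identification of top-degree fundamental classes by $\kappa$, which is essentially forced by connectedness and dimension; everything else is bookkeeping via \proref{P.WuSW} and Thom's theorem.
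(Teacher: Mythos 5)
Your proof is correct and follows essentially the same route as the paper: Thom's criterion for unoriented bordism combined with $\kappa(w_{2i}(X))=w_i(X^G)$ from \proref{P.WuSW} and the vanishing of $H^{odd}(X)$ to match up all Stiefel--Whitney numbers. The only addition is that you spell out the top-degree compatibility of $\kappa$ with evaluation on fundamental classes, a point the paper leaves implicit in asserting the bijection of Stiefel--Whitney numbers.
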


\begin{proof}
By theorems of Pontrjagin and  of Thom \cite[pp.~52-53]{milnor-stasheff1}, 
a manifold bounds if and only if
all its Stiefel-Whitney numbers vanish. As $H^{odd}(X)=0$, \proref{P.WuSW}
implies that the collections of the Stiefel-Whitney
numbers for $X$ and $X^G$ are in bijection.
\end{proof}

Since a surface bounds if and only if its Euler characteristic is even, the same statement
holds true, by \corref{C.2VuSW}, for a conjugation $4$-manifold. Actually, 
any orientable $4$-manifold satisfies $w_2^2=w_4$ by Wu's formula, so it bounds if and only if
its Euler characteristic is even. 

The following proposition will be useful.
\begin{Proposition}\label{P.Cma4n}
Let $X$ be a smooth closed connected $G$-manifold of dimension $4n$.
Suppose that $H^k(X)=0$ for $0<k<2n$. Then, the following statements
are equivalent
\begin{enumerate}
\item $X$ is a conjugation manifold.
\item $X^G$ is a $2n$-manifold and $b_n(X^G)\geq b_{2n}(X)$.
\end{enumerate}
\end{Proposition}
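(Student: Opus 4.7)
The direction (1) $\Rightarrow$ (2) is immediate from the definition of a conjugation manifold: $X^G$ is automatically a smooth $2n$-manifold, and the $\kappa$-isomorphism in degree $2n$ gives $b_n(X^G) = b_{2n}(X)$.

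For (2) $\Rightarrow$ (1), I would first deduce equivariant formality of $X$ via Smith-theoretic inequalities. Poincar\'e duality with $\bbz_2$-coefficients and the vanishing hypothesis $H^k(X)=0$ for $0<k<2n$ give $\sum_i b_i(X) = 2 + b_{2n}(X)$. For $X^G$, a closed $2n$-manifold (assumed non-empty; otherwise no $H^\ddstar$-frame can exist and (1) automatically fails), Poincar\'e duality yields
\[
\sum_i b_i(X^G) \,=\, 2\sum_{i=0}^{n-1} b_i(X^G) + b_n(X^G) \,\geq\, 2 + b_{2n}(X).
\]
Combined with the Borel-Smith inequality $\sum b_i(X^G) \leq \sum b_i(X)$, equality is forced throughout. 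By \proref{P.ef}, $X$ is equivariantly formal; moreover $X^G$ is connected, $b_n(X^G) = b_{2n}(X)$, and $H^i(X^G) = 0$ for all $i \notin \{0, n, 2n\}$.

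To construct the frame $(\kappa, \sigma)$, note that by \proref{P.ef}, $r\colon H^\ddstar_G(X) \hookrightarrow H^\ddstar_G(X^G) = H^\ddstar(X^G)[u]$ is injective; since $H^\ddstar(X)$ is supported in degrees $0, 2n, 4n$, $\sigma$ is only needed in these degrees. For $a\in H^{2n}(X)$, pick any additive section $\sigma(a)\in H^{2n}_G(X)$; then $r\sigma(a) \in \bbz_2\,u^{2n} \oplus H^n(X^G)\,u^n \oplus H^{2n}(X^G)$, and adding the unique scalar multiple of $u^{2n} \in H^\ddstar(BG) \subset \ker\rho$ that cancels the $u^{2n}$-coefficient produces a modified section. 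Define $\kappa(a)$ as the resulting $u^n$-coefficient.

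The main obstacle is showing $\kappa$ is bijective in degrees $2n$ and $4n$, so that the conjugation equation~\eqref{defalceq} holds. In degree $n$ the dimensions already match, so injectivity suffices: if $\kappa(a)=0$, then $r\sigma(a) \in H^{2n}(X^G)$, and for every $b \in H^{2n}(X)$ the product $r\sigma(a)\smile r\sigma(b)$ lies in $H^{2n}(X^G)\smile\bigl(H^n(X^G)u^n \oplus H^{2n}(X^G)\bigr) = 0$, since $H^i(X^G)=0$ for $i>2n$. Injectivity of $r$ gives $\sigma(a)\smile\sigma(b)=0$, hence $a\smile b=0$ for all $b$, and Poincar\'e duality on $X$ yields $a=0$. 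For the top class, pick $a\neq 0$ in $H^{2n}(X)$ and, by surjectivity of $\kappa$ together with Poincar\'e duality on $X^G$, choose $c\in H^{2n}(X)$ with $\kappa(a)\smile\kappa(c) = [X^G]$. A short argument using the $\bbz_2[u]$-module decomposition
\[
H^{4n}_G(X) \,=\, \bbz_2\,u^{4n} \,\oplus\, u^{2n}\sigma(H^{2n}(X)) \,\oplus\, \sigma(H^{4n}(X))
\]
from equivariant formality, together with the injectivity of $\kappa$, forces $a\smile c = [X]$ (if instead $a\smile c = 0$, writing $\sigma(a)\smile\sigma(c) = \mu\,u^{4n} + u^{2n}\sigma(y)$ and applying $r$ forces $\kappa(y)=0$ and $\gamma(y) = [X^G]\neq 0$, contradicting $y=0$). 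Setting $\sigma([X]) := \sigma(a)\smile\sigma(c)$ then yields $r\sigma([X]) = [X^G]\,u^{2n}$, and $\kappa([X]) := [X^G]$ satisfies the conjugation equation.
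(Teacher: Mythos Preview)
Your argument is correct and genuinely different from the paper's. One small lacuna: your degree-$4n$ construction begins ``pick $a\neq 0$ in $H^{2n}(X)$'', which is vacuous when $b_{2n}(X)=0$. That case is immediate (it is exactly \proref{L.conjSn}): with $H^n(X^G)=0$ one has $H^{4n}_G(X^G)=\bbz_2\,u^{4n}\oplus H^{2n}(X^G)\,u^{2n}$, and after cancelling the $u^{4n}$-term from any section one is left with $r\sigma([X])=\mu\,[X^G]\,u^{2n}$; injectivity of $r$ forces $\mu=1$.

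The paper takes a different route for (2)$\Rightarrow$(1). Instead of working on $X$ directly, it removes a small $G$-invariant $4n$-disk about a fixed point to obtain $X_0$, whose fixed set $X_0^G$ is $X^G$ with an open $2n$-disk deleted. The gain is that $H^{2n}(X_0^G)=0$, so $H^{2n}_G(X_0^G)=H^n(X_0^G)\,u^n\oplus\bbz_2\,u^{2n}$ has \emph{no} constant-in-$u$ piece: after the same $u^{2n}$-adjustment, $r\sigma(a)=\kappa(a)\,u^n$ on the nose, and injectivity of $r$ gives injectivity of $\kappa$ for free (no Poincar\'e-duality argument needed). Since the dimensions match, $\kappa$ is an isomorphism and $X_0$ is a conjugation space. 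The removed disk is then reattached as a \emph{conjugation cell}, and \cite[Prop.~5.1]{hausmann-holm-puppe1} is invoked to conclude that $X$ is a conjugation space.

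The trade-off: the paper's proof is shorter and avoids your Poincar\'e-duality manoeuvres in degrees $2n$ and $4n$, but it imports the conjugation-cell attachment result from \cite{hausmann-holm-puppe1}. Your approach is self-contained---everything happens inside $H^\ddstar_G(X)$ and $H^\ddstar_G(X^G)$---at the cost of the extra product computations. Both exploit the same Smith-theoretic equality $\sum b_i(X^G)=\sum b_i(X)$ to force equivariant formality and to pin down $H^\ddstar(X^G)$.
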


Observe that, in general, the existence of an abstract ring isomorphism
from $H^{2*}(X)$ and
$H^*(X^G)$ does not imply that $X$ is a conjugation manifold (see
\cite[Example~1]{franz-puppe1}).

\begin{proof}
Obviously, (1) implies (2). Also, by  Poincar\'e duality,
the condition $H^k(X)=0$ for $0<k<2n$ implies that $H^{odd}(X)=0$.

Suppose that (2) holds true.
Let $X_0$ obtained from $X$ by removing a small open $G$-invariant
$4n$-disk containing
a fixed point. Then $X_0$ is a $G$-subspace with $X_0^G$ equal to  $X^G$ minus
an open $2n$-disk.
Hence,
\begin{equation}\label{P.Cma4n-eq10}
\dim H^*(X_0^G) \geq b_0(X_0^G) + b_n(X_0^G) \geq  1 + b_{2n}(X_0) =  \dim
H^*(X_0) \, .
\end{equation}
On the other hand, $\dim H^*(X_0^G)\leq \dim H^*(X_0)$ by Smith theory
(see \cite[Corollary~1.3.8]{allday-puppe1}).
Therefore, all the inequalities occuring in~\eqref{P.Cma4n-eq10} are
equalities,
implying  $b_0(X^G)=1$, $b_n(X_0^G)=b_{2n}(X_0)$ and $H^k(X_0^G)=0$ for
$0<k<n$.
Also, by \proref{P.ef}, $X_0$ is \ef\ and
$$
H^{2n}_G(X_0^G) = H^n(X_0^G)\, u^n \oplus \bbz_2\, u^{2n} \, .
$$
Choose a section $\sigma\:H^{2n}(X_0)\to H^{2n}_G(X_0)$ of
$\rho\:H^{2n}_G(X_0)\to H^{2n}(X_0)$.
Let $\phi\: H^{2n}(X_0) \to \bbz_2$ be defined by letting $\phi(b)$ denote
the coefficient of $u^{2n}$ in  $(r\circ \sigma)(b)$. Then
by changing $\sigma(b)$
into $\sigma'(b) = \sigma(b)+ \phi(b)\,u^{2n}$,
one may assume that the image $\Image(r\pcirc\sigma)\subseteq H^n(X_0^G)\,
u$.
As $b_{2n}(X_0)=b_n(X_0^G)$,
$r\pcirc\sigma(a)$ is of the form $\kappa(a)u$ for an isomorphism
$\kappa\:H^2(X_0)\to H^1(X_0^G)$. Hence $X_0$ is a conjugation space.
Now, the closure of the small $4n$-disk removed in $X$ is what is called
a \textit{conjugation cell} in \cite[Section~5.1]{hausmann-holm-puppe1}.
By \cite[Prop.~5.1]{hausmann-holm-puppe1},
attaching a conjugation cell (by a $G$-map) to a \csp\ produces a \csp.
Therefore, $X$ is a \csp.
\end{proof}

We now restrict our attention to conjugation $4$-manifolds. Here, the $G$-action preserves the orientation (as can be seen on the tangent space to a fixed point).
In the statement below, $\zloc$ denotes $\bbz$ \textit{localized at $2$}, the smallest subring of $\bbq$
where all odd primes are invertible. 

\begin{Proposition}\label{P.munH2} 
Let $X$ be a smooth $G$-manifold of dimension $4$ with $H_1(X;\zloc)=0$.
Then, $X$ is a conjugation $4$-manifold if and only if $X^G\neq\emptyset$ and $G$ acts on $H^2(X;\zloc)$
as multiplication by $-1$.
\end{Proposition}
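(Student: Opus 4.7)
The plan is to prove the two implications separately, connecting both to the Lefschetz fixed-point formula that expresses $\chi(X^G)$ in terms of the trace of $\tau^*$ on $H^2(X;\bbq)$.

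For $(\Rightarrow)$, suppose $X$ is a conjugation $4$-manifold. The frame isomorphism $\kappa$ in degree $0$ immediately forces $X^G$ to be a non-empty closed connected surface, and the tangent representation of $\tau$ at a point of $X^G$ --- a trivial $2$-plane plus a $2$-plane on which $\tau=-1$ --- shows $\tau$ preserves orientation. Since $H^{odd}(X;\bbz_2)=0$, the universal coefficient theorem gives that $H_2(X;\bbz)$ has no $2$-torsion, so $H^2(X;\zloc)$ is a free $\zloc$-module embedding in $H^2(X;\bbq)$; it therefore suffices to show $\tau^*=-1$ on $H^2(X;\bbq)$. Lefschetz yields $\chi(X^G)=2+\mathrm{tr}(\tau^*\mid H^2(X;\bbq))$, while $\kappa$ combined with $\bbz_2$-Poincar\'e duality on the surface $X^G$ gives $\chi(X^G)=2-b_1(X^G;\bbz_2)=2-b_2(X;\bbq)$. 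Comparison produces $\mathrm{tr}(\tau^*\mid H^2(X;\bbq))=-b_2(X;\bbq)$, and since the eigenvalues of $\tau^*$ are in $\{\pm 1\}$ this forces $\tau^*=-1$.

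For $(\Leftarrow)$, suppose $X^G\neq\emptyset$ and $\tau^*=-1$ on $H^2(X;\zloc)$. The plan is to verify the hypotheses of \proref{P.Cma4n} for $n=1$. From $H_1(X;\zloc)=0$ one obtains that $H_1(X;\bbz)$ is odd-torsion, so $H_1(X;\bbz_2)=0$, and universal coefficients together with $\bbz_2$-Poincar\'e duality give $H^1(X;\bbz_2)=H^3(X;\bbz_2)=0$; in particular $w_1(X)=0$ and $X$ is orientable. Applying $\tau^*$ to a cup product $\alpha\smile\beta$ of degree-$2$ classes gives $(-\alpha)\smile(-\beta)=\alpha\smile\beta$, and non-degeneracy of the intersection form shows such products span $H^4(X;\bbq)$ whenever $H^2(X;\bbq)\neq 0$, so $\tau$ preserves orientation. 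This forces the normal representation of $\tau$ at every fixed point to have even rank, so each component of $X^G$ has dimension $0$, $2$, or $4$; a $4$-dimensional component would make $\tau$ trivial on $X$ by connectedness, contradicting $\tau^*=-1$ on $H^2(X;\zloc)$.

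The main obstacle is to rule out isolated fixed points. Writing $X^G=\bigsqcup_i\Sigma_i\sqcup\{p_1,\dots,p_{n_p}\}$ with $n_\Sigma$ surface components, the Lefschetz identity $\chi(X^G)=2-b_2(X;\bbq)$ and the relation $\chi(X^G)=2n_\Sigma+n_p-b_1(X^G;\bbz_2)$ give $b_1(X^G;\bbz_2)=2n_\Sigma+n_p-2+b_2(X;\bbz_2)$; combined with the Smith inequality $\sum b_i(X^G;\bbz_2)\leq 2+b_2(X;\bbz_2)$ this forces $2n_\Sigma+n_p\leq 2$. The integral strength of the hypothesis $\tau^*=-1$ on $H^2(X;\zloc)$ (as opposed to merely on $H^2(X;\bbz_2)$) is what should eliminate the remaining cases with $n_\Sigma=0$, so that $X^G$ is a closed connected surface with $b_1(X^G;\bbz_2)=b_2(X;\bbz_2)$. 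This is exactly the input required to invoke \proref{P.Cma4n}, which then shows that $X$ is a conjugation $4$-manifold.
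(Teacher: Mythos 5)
Your forward direction is correct and is genuinely different from the paper's (which deduces it from Edmonds \cite{edmonds2} after checking that his arguments are $2$-local): the Lefschetz identity $\chi(X^G)=2+\mathrm{tr}\bigl(\tau^*\mid H^2(X;\bbq)\bigr)$, compared with $\chi(X^G)=2-b_2(X;\bbz_2)$ coming from $\kappa$, does force $\tau^*=-1$ on the torsion-free module $H^2(X;\zloc)$; this is a nice, more self-contained route. The problem is the converse. Reducing to \proref{P.Cma4n} is a sensible strategy, and your bookkeeping $2n_\Sigma+n_p\leq 2$ is correct, but at the decisive point --- excluding the cases $n_\Sigma=0$, i.e.\ showing that the fixed set is a surface rather than one or two isolated points --- you only assert that the integral strength of the hypothesis ``should'' eliminate them. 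That step is exactly the hard content of the proposition; it is the part the paper delegates to Puppe's theorem \cite{puppe1}, and it cannot be extracted from the mod $2$ Euler-characteristic and Smith-theory counting you have already used.

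Concretely: in the case $(n_\Sigma,n_p)=(0,1)$ your own identity forces $b_2(X)=1$, hence $\sign(X)=\pm 1$, and the exclusion requires the equivariant signature formula $\sign(X,\tau)=[F]^2$ (the tool used in the paper's proof of \proref{P.spinStan}): since $\tau^*=-1$ gives $\sign(X,\tau)=-\sign(X)\neq 0$, a two-dimensional fixed component must be present. Nothing of this kind appears in your argument. Worse, the case $(n_\Sigma,n_p)=(0,2)$ with $b_2(X)=0$ is actually realized: the linear involution $\mathrm{diag}(-1,-1,-1,-1,1)$ on $S^4$ is orientation-preserving, has exactly two isolated fixed points, and satisfies all the stated hypotheses vacuously ($H^2=0$) without being a conjugation. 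So this case cannot be ``eliminated'' from the hypotheses by any argument; it is a degenerate case ($b_2=0$) that a complete proof must use $b_2(X)>0$ to avoid, and it shows your missing step is not a routine detail. Relatedly, your orientation-preservation argument (needed both for the even-rank normal representation and for the Lefschetz identity in this direction) is only given when $H^2(X;\bbq)\neq 0$. As written, then, the converse has a genuine gap precisely where the paper invokes Puppe's theorem.
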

 
\begin{Remark}\rm
The condition $H_1(X;\zloc)=0$ is equivalent to $H^1(X;\bbz_2)=0$. The condition on
$H^2(X;\zloc)$ is then equivalent to  $\tau$ acting as multiplication by $-1$
on $H^*(X;\bbz)$ modulo torsion.
\end{Remark}

\begin{proof} If $X$ is simply-connected  with $G$ acting as  $\tau_*=-1$ on $H^2(X;\bbz)$, then $X$ is a conjugation manifold by results of V.~Puppe (see \cite[Theorem~5 and Remark~2]{puppe1}). We note that the same arguments (which are all $2$-local) prove that $X$ is a conjugation $4$-manifold under our weaker assumption.
Again, if $X$ were simply-connected then the other direction would follow from results of A.~Edmonds \cite[2.4]{edmonds2}. We leave the reader to verify our claim that the arguments of \cite[2.1-2.4]{edmonds2} are $2$-local, and again hold under our weaker assumption. It follows that the number of $\zloc$-summands in $H^2(X;\zloc)$ on which $\tau_* =-1$ is equal to the rank of $H^1(\Sigma;\bbz_2)$, which equals the rank of $H^2(X;\bbz_2)$. Hence we have $\tau_*=-1$ on all of $H^2(X;\zloc)$.
\end{proof}

\begin{remark}\label{R.spin}
If $X^G$ is orientable then its integral fundamental class $[X^G]$
represents a $\tau_*$-fixed class in $H_2(X;\bbz)$ modulo odd torsion.
But $\tau_* = -1$ on this quotient, so $X^G$ is null-homologous ${\rm
mod\, } 2$ in $X$. On the other hand, if $X^G$ is non-orientable then $w_2(X)
\neq 0$ by \corref{C.VuSW} and,
by \cite[Cor.~5.2]{edmonds2}, the ${\rm mod\, }2$ homology class of
the fixed set $X^G$ represents the Poincar\'e dual of $w_2(X)$. We
conclude that $X^G$ is null-homologous ${\rm mod\, } 2$
in $X$ if and only if $X^G$ is orientable, or equivalently if and only
if $X$ is spin.
\end{remark}

\begin{Proposition}\label{P.spinStan}
Let $X$ be a simply connected smooth conjugation $4$-manifold.  Then $X$ is (non-equivariantly) homeomorphic to a connected sum of copies of $S^2\times S^2$, if $X$ is spin, or copies of $\CP^2$ and $\overline{\CP}^2$, if $X$ is non-spin.
\end{Proposition}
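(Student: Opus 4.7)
The strategy is to pin down the intersection form of $X$ and then invoke Freedman's classification of simply connected topological 4-manifolds; since $X$ is smooth, its Kirby--Siebenmann invariant vanishes, so the intersection form alone determines the homeomorphism type. Set $\Sigma = X^G$, a closed connected surface. The conjugation isomorphism identifies $b_2(X) = b_1(\Sigma;\bbz_2)$, and simple connectivity of $X$ makes $H_2(X;\bbz)$ a torsion-free abelian group of this rank. By \proref{P.munH2}, $\tau^* = -1$ on $H^2(X;\bbz)$, so the Atiyah--Singer equivariant signature satisfies $\sign(\tau,X) = -\sign(X)$. By \corref{C.VuSW}, $X$ is spin if and only if $\Sigma$ is orientable, which cleanly separates the two cases.

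Suppose first that $X$ is spin, so $\Sigma$ is orientable of some genus $g$. Because $\tau$ fixes $\Sigma$ pointwise, the integral fundamental class $[\Sigma] \in H_2(X;\bbz)$ is $\tau_*$-invariant; combined with $\tau_* = -1$ on this torsion-free group, this forces $[\Sigma] = 0$. The $G$-signature theorem for an orientation-preserving involution on a closed oriented 4-manifold with orientable 2-dimensional fixed set then yields $\sign(\tau,X) = [\Sigma] \cdot [\Sigma] = 0$, hence $\sign(X) = 0$. The intersection form of $X$ is therefore even, unimodular, of rank $2g$ and signature zero, so by the classification of even unimodular forms it is isomorphic to $g\cdot H$ (hyperbolic); Freedman's theorem then delivers $X \cong \#g\,(S^2 \times S^2)$.

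If instead $X$ is non-spin, then $\Sigma$ is non-orientable and the intersection form of $X$ is odd. Donaldson's diagonalization theorem handles the definite case, while the classical classification of indefinite odd unimodular forms handles the remainder; either way the form is $a\,\la 1\ra \oplus b\,\la -1\ra$ for some $a,b \geq 0$, and Freedman's theorem yields $X \cong \#a\,\CP^2 \,\#\, b\,\overline{\CP}^2$.

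The main obstacle I anticipate is the spin-case $G$-signature calculation: one must establish $[\Sigma] = 0$ in integral (not merely mod~$2$) homology, which uses the torsion-freeness of $H_2(X;\bbz)$, and invoke the precise four-dimensional $G$-signature formula equating the equivariant signature with the self-intersection of an orientable fixed surface. Once the parity of the form has been identified, the non-spin case becomes an essentially formal combination of Donaldson's and Freedman's theorems.
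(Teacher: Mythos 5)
Your proposal is correct and follows essentially the same route as the paper: identify spinness with orientability of $X^G$, use $\tau^*=-1$ on $H^2$ together with the $G$-signature theorem and the vanishing of $[\Sigma]$ to get $\sign(X)=0$ in the spin case, and combine Donaldson's theorem with the classification of odd unimodular forms in the non-spin case, concluding with Freedman in both. The only cosmetic difference is that you deduce $[\Sigma]=0$ from torsion-freeness of $H_2(X;\bbz)$ in the simply connected case, while the paper argues $2$-locally via its Remark \ref{R.spin}.
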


\begin{proof} In the non-spin case, if $X$ has a (positive) definite intersection form then $X$ is homeomorphic to a connected sum of copies of $\CP^2$ (by Donaldson \cite{donaldson1} and Freedman \cite{freedman1}). If $X$ has an indefinite intersection form then $X$ is homeomorphic to a connected sum of $\CP^2$'s and $\overline{\CP}^2$'s by Freedman's Theorem and the classification of odd unimodular indefinite forms. 

If $X$ is spin, 
we use the equivariant Hirzebruch formula \cite[Formula~(6)]{hirzebruch1}, \cite[Prop.~6.15]{atiyah-III}: 
$$
\sign(X, \tau) = \sign((X^\tau)^2) 
$$
where the right-hand side is given by evaluating the twisted Euler class of the normal bundle of $\Sigma$ in $X$. By \proref{P.munH2}, $G$ acts on $H^2(X;\bbz)$ by multiplication by $-1$.
Therefore, $\sign(X) = -\sign(X,\tau)$. 
As $X$ is spin, the manifold $X^G$ is orientable, by \corref{C.VuSW}.  
By \remref{R.spin}, the integral homology class represented by $X^G$ is zero, 
and hence $\sign(X) = 0$. We deduce that the (even) integral intersection form of $X$  is a sum of  hyperbolic forms and apply 
Freedman's theorem again.
\end{proof}

\begin{remark} The results of \proref{P.munH2} and Remark \ref{R.spin}   hold also for topological conjugation $4$-manifolds, if the involution is assumed to be locally linear (see Section \ref{S.top}). The corresponding result to \proref{P.spinStan} is true in the spin topological locally-linear case: note that the index formula holds in this context \cite[14B]{wall-book} and the Kirby-Siebenmann invariant vanishes because $X$ is spin with $\sign(X)=0$. In the non-spin case, we don't know what happens if $X$ has a definite intersection form.  
\end{remark}
\end{ccote}

\section{The proof of Theorem A}\label{S.proofThmA}

We divide the two directions of the proof into separate lemmas. 

\begin{Lemma}\label{con2knot}
Let $X$ be a connected, oriented, $4$-dimensional conjugation manifold.
Then $(X/G,X^G)$ is a \ztk.
\end{Lemma}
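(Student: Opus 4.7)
The plan is to verify the three defining conditions for $(X/G, X^G)$ to be a \ztk: that $X/G$ is a closed oriented smooth $4$-manifold, that $X^G$ is a closed connected surface embedded in $X/G$, and that $X/G$ is a $\bbz_2$-homology $4$-sphere.

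\textbf{Smooth structure and orientability.} First, I would invoke \lemref{smoothquo}, which provides a canonical smooth structure on $X/G$ in which $X^G$ is a smooth codimension-$2$ submanifold. Since the involution on $X$ preserves orientation (noted in the paragraph preceding \proref{P.munH2}), the orientation of $X$ descends to an orientation of $X/G$ via the local model $\bbr^2 \times (\bbc/{\pm})\cong \bbr^4$ near fixed points.

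\textbf{The fixed surface.} The conjugation-space isomorphism $\kappa\colon H^{2\dstar}(X)\to H^{\ddstar}(X^G)$ halves degrees. Since $\dim X=4$ and $H^{odd}(X)=0$, $H^{\ddstar}(X^G)$ is concentrated in degrees $0,1,2$, so the closed smooth fixed set $X^G$ is a surface, and its connectedness follows from $\kappa\colon H^0(X)\cong H^0(X^G)$.

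\textbf{Homology $4$-sphere.} Let $M:=X/G$ and $\Sigma:=X^G$, and write $n:=\dim H^2(X;\bbz_2)$. The conjugation structure gives $\chi(X)=2+n$ and $\chi(\Sigma)=2-n$. The Euler-characteristic formula $\chi(X)=2\chi(M)-\chi(\Sigma)$ for the branched double cover $X\to M$ then yields $\chi(M)=2$. Since $M$ is a closed oriented $4$-manifold, Poincar\'e duality gives $b_1(M)=b_3(M)$, so the required vanishings $H^i(M;\bbz_2)=0$ for $i=1,2,3$ reduce to showing $b_1(M;\bbz_2)=0$ (whence $b_2(M;\bbz_2)=0$ too). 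To prove this, my approach is to combine the long exact sequence of the pair $(X,X\setminus X^G)$ in $G$-equivariant cohomology---using the equivariant Thom isomorphism $H^{\ddstar}_G(X,X\setminus X^G)\cong H^{\ddstar-2}(\Sigma)[u]$---with the identification $H^{\ddstar}(M\setminus\Sigma)\cong H^{\ddstar}_G(X\setminus X^G)$ (from the free $G$-action on the complement), and the analogous long exact sequence of the pair $(M,M\setminus\Sigma)$. The injectivity of the restriction $H^{\ddstar}_G(X)\to H^{\ddstar}_G(X^G)$ from equivariant formality, together with the conjugation equation---in particular, that $\kappa$ bijects $H^2(X)$ with $H^1(\Sigma)$---should then force $H^1(M;\bbz_2)=0$.

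The main obstacle is this cohomological computation in the last step: carefully identifying the connecting homomorphisms in the long exact sequences, and checking that the image of the relevant maps from $H^{\ddstar}(\Sigma)$ interacts with the conjugation equation as needed to yield the $H^1$-vanishing.
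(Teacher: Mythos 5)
Your reductions are sound and run parallel to the paper's own argument: the smooth structure and orientation on $X/G$ via \lemref{smoothquo}, the identification of $X^G$ as a closed connected surface, the Euler-characteristic count $\chi(X/G)=2$, and the Poincar\'e-duality reduction of everything to $b_1(X/G;\bbz_2)=0$ are all correct (the paper derives the same identity $\chi(X)+\chi(X^G)=2\chi(X/G)$ from the tube decomposition and likewise works with the equivariant cohomology of $X$ cut along a $G$-invariant tube, using the injectivity of $H^*_G(X)\to H^*_G(X^G)$ from \proref{P.ef}). The genuine gap is exactly the step you flag: the assertion that formality, the Thom isomorphisms and the conjugation equation ``should force'' $H^1(X/G;\bbz_2)=0$ is the entire content of the lemma beyond bookkeeping, and the ingredients you list do not suffice by themselves; two further inputs, absent from your outline, are needed.

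Concretely, writing $M=X/G$, $\Sigma=X^G$, $V$ a $G$-invariant closed tube around $\Sigma$ with complement $K$, and $\bar V=p(V)$, $\bar K=p(K)$, the paper pins down degree one as follows: (i) since $\Sigma$ has codimension two, $K$ is connected, so $K\to\bar K$ is a nontrivial double cover and $b_1(\bar K)\ge 1$; (ii) the Gysin sequence of the circle bundle $\partial\bar V\to\Sigma$ gives $b_1(\partial\bar V)\le b_1(\Sigma)+1$. Together with $\dim H^1_G(X)=1$ (from $H^1(X)=0$ and formality), the short exact equivariant Mayer--Vietoris sequence forces $b_1(\bar K)=1$, $b_1(\partial\bar V)=b_1(\Sigma)+1$, and that $H^1(\Sigma)\oplus H^1(\bar K)\to H^1(\partial\bar V)$ is an isomorphism; fed into the ordinary Mayer--Vietoris sequence of $M=\bar V\cup\bar K$ this yields $H^1(M)=H^3(M)=0$, and your $\chi(M)=2$ then kills $H^2(M)$. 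If you prefer your pair-sequence formulation, the corresponding missing points are: the equivariant Thom isomorphism gives $H^1_G(X,X\setminus\Sigma)=0$ and $H^2_G(X,X\setminus\Sigma)\cong\bbz_2$, and the generator maps to a class of $H^2_G(X)$ whose restriction to $\Sigma$ is the equivariant Euler class $u^2+w_1(\nu)u+w_2(\nu)\neq 0$ of the normal bundle $\nu$; hence the connecting homomorphism vanishes and $H^1(M\setminus\Sigma)\cong H^1_G(X\setminus\Sigma)\cong H^1_G(X)\cong\bbz_2$, generated by the class $w$ classifying the double cover $X\setminus\Sigma\to M\setminus\Sigma$. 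Since $H^1(M,M\setminus\Sigma)=0$, $H^1(M)$ injects into $H^1(M\setminus\Sigma)$, and its image cannot contain $w$ because $w$ is nonzero on a meridian circle while meridians bound discs in $\bar V\subset M$; hence $H^1(M)=0$. Some argument of this kind --- in particular some use of the nonvanishing equivariant Euler class (or the Gysin bound) and of the nontriviality of the covering class on meridians (or the connectedness of $X\setminus X^G$, which is where the codimension-two hypothesis enters) --- is indispensable and is not supplied by the exact sequences and the conjugation equation alone.
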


\begin{proof}
As $X$ is a $4$-dimensional conjugation manifold, the
fixed point set $X^G$ is a closed connected surface which we call $\Sigma$.
Let $V$ be a closed $G$-invariant tubular neighbourhood of $\Sigma$ in $X$ and let
$K$ be the complement
of the interior of $V$. Now, $M=X/G$ is a smooth manifold by \lemref{smoothquo}.
As noted in the introduction, $\tau$ preserves the orientation, so
$M$ inherits an orientation 
and the projection map $p\:X\to M$ is smooth, of degree $2$. 
We identify $\Sigma$ with $p(\Sigma)$.
Then $\bar V = p(V)$ is a tubular neighbourhood of $\Sigma$ in $M$. 
One has $M=\bar V \cup \bar K$, with $\bar K=p(K)$. 
We have to prove that $M$ is a \hs.

As $X$ is a conjugation manifold, the restriction map $H^\ddstar_G(X)\to H^\ddstar_G(X^G)$ is injective by \proref{P.ef} and \remref{R.finGsp}.
Also, $H^\ddstar_G(K)\approx H^\ddstar(\bar K)$ and $H^\ddstar_G(\partial V)\approx H^\ddstar(\partial\bar V)$ since the $G$-action on $K$ and $\partial V$ is free.
The Mayer-Vietoris sequence in equivariant cohomology looks then like
\eqncount
\begin{equation}\label{MV-XG}
0 \to H^\ddstar_G(X) \to H^\ddstar_G(\Sigma)\oplus H^\ddstar(\bar K) \to H^\ddstar(\partial \bar V) \to 0 . 
\end{equation}
Since $\Sigma\subset X$ is codimension $2$, the manifold $K$ is connected.
Therefore, $\bar K$ has a non-trivial $2$-fold cover, which implies that
$b_1(K)\geq 1$. 
Also, since $X$ is a conjugation space, $H^1(X)=0$, so $\dim H^1_G(X)=1$.
As $G$ acts trivially on $\Sigma$, one has $\dim H^1_G(\Sigma)=b_1(\Sigma)+1$.
On the other hand, $b_1(\partial\bar V)\leq b_1(\Sigma)+1$ by the Gysin sequence 
of the circle bundle $\partial\bar V\to\Sigma$. Thus, Sequence~\eqref{MV-XG}
implies that $b_1(\bar K)=1$, $b_1(\partial\bar V)=b_1(\Sigma)+1$ and that
$H^\ddstar(\Sigma)\oplus H^\ddstar(\bar K) \fl{\approx} H^\ddstar(\partial \bar V)$ is an isomorphism. 
This isomorphism sits in the Mayer-Vietoris sequence for $M$ which
implies that $H^1(M)\approx H^3(M)=0$. The decompositions $X=V\cup K$ and 
$M=\bar V\cup \bar K$ give the system of equations
$$
\left\{
\begin{array}{rcl}
\chi(X)&=&  \chi(\Sigma) + \chi(K) -\chi(\partial V) \\
\chi(M) &=&  \chi(\Sigma) + \chi(\bar K) -\chi(\partial \bar V)
\end{array}
\right. .
$$
As $\chi(\partial V)=2\chi(\partial\bar V)$ and $\chi(K)=2\chi(\bar K)$,
one deduces that 
\eqncount
\begin{equation}\label{Eqchi}
\chi(X) + \chi(\Sigma)= 2\chi(M) \, ,
\end{equation}
(compare \cite[Chapter~III, Theorem~7.10]{bredon1}). In our case, Equation~\eqref{Eqchi} amounts to
$$
2 + b_2(X) + 2 - b_1(\Sigma) = 2\chi(M) \, ,
$$
which implies that $\chi(M)=2$. Hence, $M$ is a \hs.
\end{proof}

We now construct the correspondence from \ztk s to conjugation manifolds.

\begin{Lemma}\label{knot2con}
Let $(M,\Sigma)$ be a \ztk. Then $M$ has a unique oriented branched covering $\widehat M\to M$, with branched locus $\Sigma$.
Moreover, $\widehat M$ is a conjugation manifold.
\end{Lemma}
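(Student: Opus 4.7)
My plan is to first construct and verify uniqueness of the branched cover, then show it is a conjugation manifold by applying Proposition~\ref{P.Cma4n}. Let $\bar V$ be a tubular neighbourhood of $\Sigma$ in $M$ and $\bar K = M \setminus \mathrm{int}(\bar V)$, so that $\partial\bar V\to \Sigma$ is a circle bundle. Since $H_2(M;\bbz_2)=0$, the self-intersection $[\Sigma]\cdot[\Sigma]$ vanishes, so the mod~$2$ Euler class of this circle bundle is zero, and the Gysin sequence gives $H^*(\partial \bar V;\bbz_2) \cong H^*(\Sigma)\oplus H^{*-1}(\Sigma)$. Substituting this into the Mayer--Vietoris sequence for $M=\bar V\cup\bar K$ yields $H^1(\bar K;\bbz_2)\cong \bbz_2$, generated by a class $\alpha$ dual to the meridian of $\Sigma$. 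This class classifies a unique connected double cover $\widehat K\to \bar K$, which extends canonically to a branched double cover $\widehat M\to M$ by gluing $2$-disks along the lifted circle fibers of $\bar V$ (the lifted meridian $\widehat\mu$ covers $\mu^2$ and so bounds in the lifted disk fiber). Smoothness and orientability are supplied by the Appendix, and uniqueness follows from the uniqueness of~$\alpha$.

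Next, the identical Mayer--Vietoris Euler-characteristic computation as for equation~\eqref{Eqchi} (applied to $\widehat M=\widehat V\cup\widehat K$, using $\chi(\widehat V)=\chi(\Sigma)$, $\chi(\widehat K)=2\chi(\bar K)$, $\chi(\partial\widehat V)=2\chi(\partial\bar V)$) gives $\chi(\widehat M)+\chi(\Sigma)=2\chi(M)=4$, hence $\chi(\widehat M)=2+b_1(\Sigma;\bbz_2)$. To apply Proposition~\ref{P.Cma4n} with $n=1$, it then suffices to verify $H^1(\widehat M;\bbz_2)=0$: Poincar\'e duality will force $H^3(\widehat M;\bbz_2)=0$, so $b_2(\widehat M)=\chi(\widehat M)-2=b_1(\Sigma;\bbz_2)$, and the hypothesis $b_1(\Sigma)\geq b_2(\widehat M)$ of~\ref{P.Cma4n} holds (with equality).

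To establish $H^1(\widehat M;\bbz_2)=0$ I use the Mayer--Vietoris sequence for $\widehat M=\widehat V\cup\widehat K$. The space $\widehat V$ retracts onto $\Sigma$; since the mod~$2$ normal Euler class of $\widehat V$ is twice that of $\bar V$, it is again zero, so $H^*(\partial\widehat V;\bbz_2)\cong H^*(\Sigma)\oplus H^{*-1}(\Sigma)$ as before. This yields
\[
0 \to H^1(\widehat M;\bbz_2) \to H^1(\Sigma)\oplus H^1(\widehat K;\bbz_2) \xrightarrow{\phi} H^1(\Sigma)\oplus H^0(\Sigma).
\]
The contribution from $H^1(\widehat V)\cong H^1(\Sigma)$ maps via the bundle projection injectively into the first summand. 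The Gysin sequence for the double cover $\widehat K\to\bar K$ (with classifying class $\alpha$) shows that $\dim H^1(\widehat K;\bbz_2)\leq 1$; when this dimension is one, a direct geometric check shows that the generator restricts nontrivially into the meridian-dual summand $H^0(\Sigma)$, complementary to the image of $H^1(\widehat V)$. In either case $\phi$ is injective, so $H^1(\widehat M;\bbz_2)=0$ and~\ref{P.Cma4n} completes the proof. The main obstacle in this plan is precisely the latter case: showing that the generator of $H^1(\widehat K;\bbz_2)$ pairs nontrivially with the lifted meridian class, which requires unwinding the geometric meaning of the Gysin transfer for the double cover.
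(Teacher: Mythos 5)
The first half of your plan (constructing the double cover of the knot complement and extending it to a branched cover) is essentially the paper's, but the second half contains a genuine error. You claim that the mod~$2$ Euler class of the upstairs normal circle bundle $\partial\widehat V\to\Sigma$ is ``twice'' that of $\partial\bar V\to\Sigma$ and hence zero. The relation goes the other way: for a $2$-fold branched cover the (twisted) normal Euler number of the branch surface upstairs is \emph{half} of the one downstairs, and half of an even number need not be even. Concretely, for the standard $\RP^2\subset S^4$ with normal Euler number $\pm2$ one gets $\widehat M=\CP^2$, and the fixed $\RP^2\subset\CP^2$ has normal Euler number $\pm1$; its mod~$2$ Euler class is nonzero, the fibre class dies in the Gysin sequence, and $H^1(\partial\widehat V;\bbz_2)\cong H^1(\Sigma)$ rather than $H^1(\Sigma)\oplus H^0(\Sigma)$. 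So the splitting your displayed Mayer--Vietoris sequence relies on fails in general (it fails exactly when $\Sigma$ is non-orientable with downstairs Euler number $\equiv 2 \bmod 4$). A smaller point in the construction step: that the generator of $H^1(\bar K;\bbz_2)$ is nontrivial on meridians (so that the cover extends as a \emph{branched} rather than honest cover) is asserted; it does follow from your Mayer--Vietoris isomorphism, or, as in the paper, from the fact that otherwise the cover would extend to an unbranched double cover of $M$, contradicting $H^1(M)=0$.

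Even in the cases where your splitting of $H^1(\partial\widehat V)$ is valid, the decisive step --- that when $\dim H^1(\widehat K;\bbz_2)=1$ its generator restricts nontrivially to the fibre summand --- is only asserted as ``a direct geometric check,'' and you yourself flag it as the unresolved main obstacle; as written, $H^1(\widehat M;\bbz_2)=0$ is not established. The paper's proof is designed to avoid computing any cohomology of the covering pieces upstairs: it runs Mayer--Vietoris in \emph{equivariant} cohomology for $\widehat M=\widehat W\cup\widetilde L$, where freeness of the $G$-action gives $H^\ddstar_G(\widetilde L)\cong H^\ddstar(L)$ and $H^\ddstar_G(\widetilde{\partial W})\cong H^\ddstar(\partial W)$ --- quantities already computed downstairs --- yielding $H^1_G(\widehat M)=\bbz_2$; then the Serre spectral sequence of $\widehat M\to\widehat M_G\to BG$, split by a fixed point, forces $H^1(\widehat M)^G=0$ and hence $H^1(\widehat M)=0$, after which the Euler characteristic count and \proref{P.Cma4n} finish exactly as you intended. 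To complete your argument you should either adopt that equivariant step or genuinely compute $H^1(\widehat K)$ and $H^1(\partial\widehat V)$ (taking the halved Euler class into account); the route as written cannot be completed.
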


\begin{proof} 
Let $W$ be a closed tubular neighbourhood
of $\Sigma$ in $M$ and let $L$ be the complement
of the interior of $W$. 
For $i=1,2$, one has the ``Alexander duality'
$$
H^i(L)\fl{\approx} H^{i+1}(M,L) \fl{\approx} H^{i+1}(W,\partial W)  \fl{\approx}
H^{i-1}(\Sigma) \ ,
$$
the last arrow being the Thom isomorphism.
Thus, $b_1(L)=1$, $b_2(L)=b_1(\Sigma)$. Also, 
the Mayer-Vietoris exact sequence for the decomposition $M=W\cup L$
gives the isomorphisms
\eqncount
\begin{equation}\label{MV-iso}
H^i(\Sigma)\oplus H^i(L) \fl{\approx} H^i(\partial W)  \hskip 3mm
(i=1,2) \, .
\end{equation}
Therefore, $b_1(\partial W)=b_2(\partial W)=b_1(\Sigma)+1$.
Since $\Sigma$ is of codimension $2$ in $M$, the manifold $L$ is connected.
As $b_1(L)=1$, there is a unique connected $2$-fold cover $\widetilde L\to L$.
The induced cover $\widetilde {\partial W}\to \partial W$ 
is connected:  otherwise, using \lemref{P.Exibran} and \remref{R.Exibran},
 $\widetilde L\to L$ could be extended to a connected $2$-fold covering of $M$, 
contradicting the assumption that $H^1(M)=0$. Hence, $\widetilde L\to L$ extends to a unique
branched covering $\widehat M\to M$ with branched locus $\Sigma$, 
see \lemref{P.Exibran}. The complement
$\widehat W$ of the interior of $\widetilde L$ is a tubular neighbourhood of $\Sigma$
in $\widehat M$.

In equivariant cohomology,
the Mayer-Vietoris exact sequence for $\widehat M=\widehat W \cup \widetilde L$ starts as
\eqncount
\begin{equation}\label{MV-YG}
0 \to H^1_G(\widehat M) \to H^1_G(\Sigma)\oplus H^1(L) \to H^1(\partial W)
\end{equation}
with $H^1_G(\Sigma)=H^1(\Sigma)\oplus\bbz_2$. Using the isomorphism of \eqref{MV-iso}
for $i=1$, we deduce that $H^1_G(\widehat M)=\bbz_2$. This implies that $H^1(\widehat M)=0$.
Indeed, the choice of a $G$-fixed point in $\widehat M$ provides a section to the
fibration $\widehat M\to \widehat M_G\to BG$. 
The homomorphism $H^\ddstar(BG)\to H^\ddstar_G(\widehat M)$ is then injective and  
the Serre spectral sequence for the fibration $\widehat M\to \widehat M_G\to BG$
gives the exact sequence
$$
0 \to H^1(BG)\to H^1_G(\widehat M) \to H^0(BG;H^1(\widehat M)) \to 0 \, .
$$
Thus, if $H^1_G(\widehat M)=\bbz_2$, then $0=H^0(BG;H^1(\widehat M))=H^1(\widehat M)^G$.
But, a finite dimensional $G$-vector space $V$ over $\bbz_2$ vanishes if $V^G=0$
(if $0\neq v \neq \tau(v)$ then $0\neq v+ \tau(v)\in V^G$). Therefore, $H^1(\widehat M)=0$.
By Poincar\'e duality, we have then $H^{odd}(\widehat M)=0$.
Equation~\eqref{Eqchi} holds with the same proof and gives
$\chi(\widehat M) + \chi(\Sigma)= 2\chi(M)=4$ which implies that $b_2(\widehat M)=b_1(\sigma)$.
Therefore, $\widehat M$ is a conjugation manifold by \proref{P.Cma4n}.
\end{proof}

\begin{proof}[The proof of Theorem A] 
With the equivalence relations used for the statement of Theorem~A,
let $\calc$ be the set of equivalence classes of oriented conjugation $4$-manifolds
and let $\caln$ be that of classes of \ztk s.
By \lemref{con2knot}, the correspondence $X\mapsto X/G$ associates a a \ztk\ to a
conjugation $4$-manifold. 
By \lemref{smoothquoUNI}, this correspondence produces a well defined
map $\Phi\:\calc\to\caln$.

By \lemref{knot2con}, the correspondence $M\mapsto \widehat M$ sends a
\ztk\ to an oriented conjugation $4$-manifold. 
By \lemref{L.Unibran}, this provides a well defined map 
$\Psi\:\caln\to\calc$.

The fact that $\Phi\pcirc\Psi={\rm id}_{\caln}$ is guaranteed by \lemref{brcov2invo}.
That $\Psi\pcirc\Phi={\rm id}_{\calc}$ follows from \lemref{smoothquo}
(since $p\:X\to X/G$ is a branched covering with branched locus $\Sigma$), 
and the uniqueness part of \lemref{P.Exibran}.
\end{proof}

\section{Examples and Remarks}\label{S.Exples}

We present  some examples and remarks concerning the bijection of Theorem A.

\begin{example}\label{E.TrivKnot}
Let $\Sigma=S^2\subset \bbr^3\times 0 \subset\bbr^3\times\bbc$. The involution on 
$X=S^4\subset\bbr^3\times\bbc$ induced by the linear map $(w,z)\mapsto (w,-z)$
is a conjugation by \proref{L.conjSn}. The map $q\:\bbr^3\times\bbc\onto \bbr^3\times\bbc$
given by $q(w,z)=(w,z^2)$ identifies $(\bbr^3\times\bbc)/G$ with $\bbr^3\times\bbc$ and
$S^4/G$ with $S^4$. This shows that, with the smooth structure of \lemref{smoothquo},
$S^4/G$ is diffeomorphic to $S^4$. 
The image in $X/G$ of the sphere $S^3\subset\bbr^3\times\bbr$ is a $3$-disk with
boundary $\Sigma$. Hence, under the bijection $X\mapsto X/G$ of Theorem~A, the standard conjugation
sphere corresponds to the trivial knot. 
\end{example}

Interesting examples occur with $\Sigma=\rp$.

\begin{Proposition}\label{P.rpor}
Let $(M,\Sigma)$ be a \ztk\ with $\Sigma=\rp$. Then, any homeomorphism
$h\:(M,\Sigma)\to (M,\Sigma)$ is of degree one. In consequence,
$(M,\sigma)$ and $(-M,\sigma)$ are inequivalent \ztk s.
\end{Proposition}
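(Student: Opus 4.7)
The plan is to pass through the bijection of Theorem~A to the oriented double branched cover $p\:X\to M$, which is a conjugation $4$-manifold with $X^G=\Sigma=\rp$. The essential point is that $\sign(X)=\pm 1\neq 0$, which forbids orientation-reversing self-homeomorphisms of $X$ and hence of the pair $(M,\Sigma)$.

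First I would read off the mod~$2$ Betti numbers of $X$ from the halving-degrees isomorphism $\kappa\:H^{2\dstar}(X;\bbz_2)\fl{\approx}H^\ddstar(\rp;\bbz_2)$: they are $(b_0,b_1,b_2,b_3,b_4)=(1,0,1,0,1)$. The universal coefficient theorem applied in degrees $1$ and $3$ forces $H_1(X;\bbz)$ to have neither free part nor $2$-torsion, and $H_2(X;\bbz)$ to have no $2$-torsion. Combined with Poincar\'e duality and UCT in degree~$2$, this gives $H_2(X;\bbz)\cong\bbz\oplus H_1(X;\bbz)$, with the torsion summand odd, so the free rank of $H_2$ is exactly $1$. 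Since the intersection form is unimodular on $H_2$ modulo torsion, it is $\langle\pm 1\rangle$, whence $\sign(X)=\pm 1 \neq 0$.

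Next I would lift any homeomorphism $h\:(M,\Sigma)\to(M,\Sigma)$ to a self-homeomorphism $\widehat h\:X\to X$. As in the proof of \lemref{knot2con}, $X\to M$ is characterised by the unique connected $2$-fold cover of $M\setminus\Sigma$; the restriction of $h$ to $M\setminus\Sigma$ preserves that cover (the non-zero class in $H^1(M\setminus\Sigma;\bbz_2)=\bbz_2$ is fixed), so $h$ lifts on complements and extends across the branch locus by the uniqueness part of \lemref{P.Exibran}. Since $p$ is a local homeomorphism off $X^G$, comparing top cohomology classes gives $\deg\widehat h=\deg h$. If $\deg h=-1$, then $\widehat h^{*}$ would be an anti-isometry of the intersection form on $H_2(X;\bbz)/\mathrm{torsion}$, so the form would be isomorphic to its negative, contradicting $\sign(X)=\pm 1$. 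Therefore $\deg h=+1$.

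The second assertion is then immediate: a smooth equivalence $(M,\Sigma)\to(-M,\Sigma)$ is by definition an orientation-preserving diffeomorphism $M\to -M$ sending $\Sigma$ to $\Sigma$, i.e.\ a degree-$(-1)$ self-diffeomorphism of $(M,\Sigma)$, which the first part forbids. The delicate step is the integral homology computation producing $\sign(X)=\pm 1$; the lifting argument and the degree bookkeeping are then routine consequences of the branched cover material in the appendix.
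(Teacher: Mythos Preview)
Your proof is correct and follows the paper's strategy: lift $h$ to a self-homeomorphism $\widehat h$ of the branched double cover, show that $H^2$ of the cover has rational rank one with nondegenerate cup-pairing, and conclude that $\widehat h$ (hence $h$) has degree $+1$. The paper's write-up is marginally slicker at the last step: instead of computing $\sign(X)=\pm 1$, it notes that $H^\ddstar(\widehat M;\bbq)\cong\bbq[a]/(a^3)$, so $\widehat h^*(a)=\lambda a$ gives $\deg\widehat h=\lambda^2\geq 0$, hence $+1$---the same content as your signature argument, phrased via the ring structure.
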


\begin{proof}
By the uniqueness of branched coverings \cite[Prop.~3]{lines1}, $h$
is covered by a homeomorphism $\widehat h\:\widehat M\to \widehat M$.
The cohomology ring  $H^\ddstar(\widehat M)$ is isomorphic to that of $\cp$.
By the universal coefficient theorem and Poincar\'e duality,
$H^\ddstar(\widehat M;\bbq)\approx \bbq[a]/(a^3)$, with $a\in H^2(\widehat M;\bbq)$.
Thus, $\widehat h^*(a)=\lambda a$ for some $\lambda\in\bbq$ 
and $\widehat h(a^2)=\lambda^2 a^2$. 
Therefore, $\widehat h$ is of degree $1$ and so is $h$. 
\end{proof}

\begin{example}\label{E.rp1}
The projective space $X=\cp$ with the complex conjugation is a conjugation manifold with $X^G=\rp$.
The quotient $X/G$ is diffeomorphic to $S^4$ (see \proref{classic1} below),
and $(X/G,\rp)$ is the ``standard" embedding of $\rp$ into $S^4$ (see Lawson \cite{lawson2} for an explicit description).
Let $r\:X\to X$ be the diffeomorphism given by 
$r(x,y,z)=(x,y,-z)$. It commutes with the complex conjugation and thus 
descends to an involution $\bar r$ of $X/G\approx S^4$, preserving $\rp$.
The diffeomorphism $r$ is isotopic to the identity by the isotopy
$r_t(x,y,z)=(x,y,e^{i\pi t}z)$.
Thus, $r$ and $\bar r$ are of degree $1$, in accordance with \proref{P.rpor}.
\end{example}

\section{Applications to knots in $S^4$}\label{S.pi1}

The $2$-fold branched coverings in which the  branch locus is a knotted $2$-sphere in $S^4$,  are particularly interesting, We first investigate the relation between the fundamental groups of a conjugation $4$-manifold $X$ and its quotient $X/G$.

Let $(M,\Sigma)$ be a \ztk, and let $W$ be a closed tubular neighbourhood
of $\Sigma$. Then $\partial W \to \Sigma$ is a locally trivial
$S^1$-bundle.
We get an exact sequence
$$
C_\infty \to \pi_1(\partial W) \to \pi_1(\Sigma) \to 1 \, ,
$$
where $C_\infty$ is an infinite cyclic group.
The image of a generator  $\gamma \in C_\infty$ via the composed homomorphism
$C_\infty \to \pi_1(\partial W)\to \pi_1(M-\Sigma)$ is called a
{\it meridian}, and denoted $m \in\pi_1(M-\Sigma)$ . A \ztk\ admits two meridians, which are  inverses  of each other.
From the proof of \lemref{knot2con} we see that $H^1(M-\Sigma)\cong \bbz_2$,
so there is a unique epimorphism $\phi\:\pi_1(M-\Sigma)\to\bbz_2$. Furthermore, $\phi(m) \neq 0$.

\begin{Proposition}\label{1compPi1}
Let $(M,\Sigma)$ be a \ztk\ and let $\widehat M$ be the associated conjugation $4$-manifold. 
If $m\in\pi_1(M-\Sigma)$ is a meridian, then
$$
\pi_1(\widehat M) \cong \ker\phi/ \la m^2\ra
$$
\end{Proposition}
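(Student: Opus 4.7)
The plan is to decompose $\widehat M$ exactly as in the construction of \lemref{knot2con} and apply van Kampen's theorem. Write $\widehat M = \widetilde L \cup_{\widetilde{\partial W}} \widehat W$, where $\widetilde L \to L$ is the connected double cover classified by $\phi$, $\widehat W$ is the closed tubular neighbourhood of $\Sigma$ in $\widehat M$, and $\widetilde{\partial W} = \partial \widetilde L = \partial \widehat W$ is a connected double cover of $\partial W$ (all three pieces being connected by \lemref{knot2con}). Since the branched covering near $\Sigma$ is modelled fibrewise by $z \mapsto z^2$, the neighbourhood $\widehat W$ is a closed $2$-disk bundle over $\Sigma$ and hence deformation-retracts onto $\Sigma$, while $\widetilde{\partial W}$ is the associated circle bundle.

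The relevant fundamental groups are then $\pi_1(\widetilde L) \cong \ker\phi$ (since $L \simeq M - \Sigma$ and the cover is classified by $\phi$) and $\pi_1(\widehat W) \cong \pi_1(\Sigma)$. From the long exact homotopy sequence of the fibration $S^1 \to \widetilde{\partial W} \to \Sigma$, the map $\pi_1(\widetilde{\partial W}) \to \pi_1(\widehat W) = \pi_1(\Sigma)$ is surjective with kernel normally generated by a fibre loop $f$. The key geometric observation concerns the inclusion $\widetilde{\partial W} \hookrightarrow \widetilde L$: since the branched cover restricts on each fibre circle of $\partial W$ to the connected double cover $z \mapsto z^2$, the loop $f$ projects to twice a fibre loop of $\partial W \to \Sigma$, i.e.\ to $m^2 \in \pi_1(M-\Sigma)$. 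As $\phi(m^2)=0$, this element lives in $\ker\phi$, and is the image of $f$ in $\pi_1(\widetilde L)$.

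Van Kampen's theorem now yields the pushout
\[
\pi_1(\widehat M) \;\cong\; \pi_1(\widetilde L) \ast_{\pi_1(\widetilde{\partial W})} \pi_1(\widehat W) \;\cong\; \ker\phi \ast_{\pi_1(\widetilde{\partial W})} \pi_1(\Sigma).
\]
Because $\pi_1(\widetilde{\partial W}) \to \pi_1(\Sigma)$ is surjective with normal kernel generated by $f$, a standard simplification of the amalgamated product collapses the pushout to the other factor modulo the normal closure of the image of that kernel; here this image is normally generated by $m^2$, giving $\pi_1(\widehat M) \cong \ker\phi / \la m^2 \ra$, where $\la m^2 \ra$ denotes the normal closure in $\ker\phi$.

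The main technical point, and the geometric heart of the formula, is the identification of the fibre loop $f \in \pi_1(\widetilde{\partial W})$ with $m^2$ rather than $m$ in $\pi_1(\widetilde L)$: this is a direct consequence of the degree-$2$ ramification along $\Sigma$, which doubles the meridian under the covering map. Everything else in the argument is a routine application of van Kampen and the homotopy exact sequence of the circle bundle.
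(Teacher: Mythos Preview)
Your proof is correct and follows essentially the same route as the paper's: both decompose $\widehat M=\widehat W\cup\widetilde L$, identify $\pi_1(\widetilde L)=\ker\phi$, observe that $\pi_1(\widetilde{\partial W})\to\pi_1(\Sigma)$ is surjective with kernel generated by the fibre class mapping to $m^2$, and conclude by Van Kampen. Your write-up is in fact slightly more explicit than the paper's about the amalgamated-product simplification step.
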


\begin{proof}  Let $M = W \cup L$, where $L \simeq M - \Sigma$, and similarly $\widehat M = \widehat W \cup \widetilde L$. As noted above, $\pi_1(\partial W) \to \pi_1(\Sigma)$ is surjective, and so is $\pi_1(\widetilde{\partial W}) \to \pi_1(\Sigma)$ by the same argument. 
We also have the description $\pi_1(\widetilde L) = \ker\phi$.
Let $\gamma$ be a generator of $C_\infty$ sent to $m \in \pi_1(L)$ and let 
$\bar\gamma$ be the image of $\gamma$ in $\ker(\pi_1(\partial W) \to \pi_1(\Sigma))$.
Then $\ker(\pi_1(\widetilde{\partial W}) \to \pi_1(\Sigma))$ is generated by $\gamma^2$. \proref{1compPi1} follows from
the Van Kampen theorem, 
because of the surjectivity of the map $\pi_1(\widetilde{\partial W}) \to \pi_1(\Sigma)$.
\end{proof}

\begin{Corollary}\label{1conncon}
Let $(M,\Sigma)$ be a \ztk\ and let $m\in\pi_1(M-\Sigma)$ be a meridian.
The associated conjugation $4$-manifold
$\widehat M$ is simply connected if and only if $\ker\phi$
is the normal closure of $m^2$.
\qedhere
\end{Corollary}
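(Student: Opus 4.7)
The plan is to read off the corollary directly from \proref{1compPi1}, which supplies the identification $\pi_1(\widehat M) \cong \ker\phi/\la m^2\ra$, with $\la m^2\ra$ denoting the normal closure of $m^2$.

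Before invoking the equivalence, I would pause to confirm that the quotient makes unambiguous sense. Since $\phi\colon \pi_1(M-\Sigma) \to \bbz_2$ is the unique epimorphism and $\phi(m)\neq 0$, one has $\phi(m^2)=2\phi(m)=0$, so $m^2\in\ker\phi$. Moreover $m\cdot m^2\cdot m^{-1}=m^2$, so conjugation by $m$ preserves the normal closure of $m^2$ inside $\ker\phi$; combined with the fact that $\ker\phi$ is normal in $\pi_1(M-\Sigma)$ and $m$ represents the non-trivial coset of $\pi_1(M-\Sigma)/\ker\phi\cong\bbz_2$, this shows that the normal closure of $m^2$ in $\ker\phi$ coincides with its normal closure in $\pi_1(M-\Sigma)$. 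In particular $\la m^2\ra$ is a normal subgroup of $\ker\phi$ and the quotient is a group, and there is no ambiguity in the phrase ``normal closure of $m^2$''.

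The corollary then reduces to a tautological string of equivalences: $\widehat M$ is simply connected iff $\pi_1(\widehat M)=1$ iff, by \proref{1compPi1}, $\ker\phi/\la m^2\ra=1$ iff $\ker\phi$ equals the normal closure of $m^2$.

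Since the real content has already been absorbed into \proref{1compPi1}, I do not foresee any genuine obstacle; the only point needing care is the well-definedness of the quotient and the equality of the two candidate normal closures, addressed above.
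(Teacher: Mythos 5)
Your proposal is correct and matches the paper's treatment: the corollary is stated as an immediate consequence of \proref{1compPi1}, read off exactly as you do by noting that $\pi_1(\widehat M)\cong\ker\phi/\la m^2\ra$ is trivial precisely when $\ker\phi$ equals the normal closure of $m^2$. Your extra check that $m^2\in\ker\phi$ and that the normal closures of $m^2$ in $\ker\phi$ and in $\pi_1(M-\Sigma)$ coincide is a harmless (and correct) addition the paper leaves implicit.
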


\begin{Proposition}\label{1conncon-bis}
Let $X$ be a conjugation $4$-manifold. If $X$ is simply connected,
then $X/G$ is homeomorphic to $S^4$.
\end{Proposition}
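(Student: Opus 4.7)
The plan is to show that $M := X/G$ is a simply-connected integer homology $4$-sphere and then invoke Freedman's theorem \cite{freedman1}. By \lemref{con2knot}, $M$ is a \hs\ and $\Sigma := X^G \subset M$ is a connected surface, so the two non-trivial steps remaining are the simple connectivity of $M$ and the upgrade of ``$\bbz_2$-homology sphere'' to ``integer homology sphere''.

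First I would establish $\pi_1(M) = 1$. Retaining the notation $W$, $L$, $m$, $\phi$ from \S\ref{S.pi1}, van Kampen applied to $M = W \cup L$ yields
\[
\pi_1(M) \cong \pi_1(L)/\la\la m \ra\ra,
\]
because $W$ deformation-retracts onto $\Sigma$ and the fibre circle of $\partial W \to \Sigma$ (whose image in $\pi_1(L)$ is $m$) bounds a disk in $W$. Since $\phi(m) \neq 0$, the meridian $m$ together with $\ker\phi$ generates $\pi_1(L)$. By \corref{1conncon}, the hypothesis $\pi_1(X) = 1$ forces $\ker\phi$ to be the normal closure of $m^2$ in $\pi_1(L)$; as $m^2 \in \la\la m \ra\ra$, this places $\ker\phi \subseteq \la\la m \ra\ra$, and hence $\pi_1(M) = 1$.

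Next I would upgrade the coefficients. Simple connectivity of $M$ makes $H_2(M;\bbz)$ free abelian (via Poincar\'e duality and the universal coefficient theorem), while $H_2(M;\bbz_2) = 0$ combined with $H_1(M;\bbz) = 0$ gives $H_2(M;\bbz) \otimes \bbz_2 = 0$. A finitely generated free abelian group that is $2$-divisible must vanish, so $M$ is a simply-connected integer homology $4$-sphere, hence a homotopy $4$-sphere by Whitehead. Freedman's theorem then closes the argument.

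I expect the fundamental group computation to be the only step requiring any real thought; once \corref{1conncon} is in hand, the rest is bookkeeping, and the conclusion being merely a homeomorphism avoids any smooth $4$-dimensional subtlety.
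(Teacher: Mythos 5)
Your proposal is correct and follows essentially the same route as the paper: use \corref{1conncon} together with $\phi(m)\neq 0$ to conclude that $\pi_1(M-\Sigma)$ is normally generated by the meridian, apply van Kampen to $M=W\cup L$ to get $\pi_1(M)=1$, upgrade the $\bbz_2$-homology sphere to an integral homology (hence homotopy) sphere, and finish with Freedman. Your explicit verification that $H_2(M;\bbz)$ is free and $2$-divisible merely spells out what the paper compresses into ``by the universal coefficient theorem and Poincar\'e duality.''
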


\begin{proof}
Let $M=X/G$ and $\Sigma=X^G$. We know that $(M,\Sigma)$ is a
\ztk. Let $m$ be a meridian for $(M,\Sigma)$. By \corref{1conncon},
$\ker\phi$ is the normal closure of $m^2$. But $\phi(m)\neq 0$,
as seen in the proof of \lemref{knot2con} since the $2$-fold covering
given by $\phi$ is not trivial over $\partial W$. This implies that
$\pi_1(M-\Sigma)$ is the normal closure of $m$. As above, the Van Kampen theorem
implies that $M$ is simply connected. By the universal coefficient
theorem and Poincar\'e duality, a simply connected \hs\ is an integral homology sphere.
Therefore, $M$ is homotopy equivalent to $S^4$, and hence homeomorphic
to $S^4$ by Freedman's proof \cite{freedman1} of  the Poincar\'e conjecture in dimension $4$.
\end{proof}

As mentioned in the Introduction, it is well-known that the classical conjugation $4$-manifolds all have quotient the standard smooth $S^4$. For the reader's convenience, we include a proof  using results on group actions due to Bredon and Orlik-Raymond (but not the deep results of Freedman \cite{freedman1} or Cerf \cite{cerf1}).
\begin{Proposition}[Arnold,  Kuiper, Massey]\label{classic1} Let $(X,G)$ denote the classical conjugation $4$-manifolds (i)  $\CP^2$ with complex conjugation,  or (ii) $S^2 \times S^2$ with the complex conjugation in each factor. Then the quotient $X/G$ with the smooth structure given by \lemref{smoothquo} is diffeomorphic to the standard smooth $S^4$.
\end{Proposition}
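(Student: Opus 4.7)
The plan is to exhibit, in each case, a smooth compact Lie group action on $X$ that commutes with the involution $\tau$ and hence descends to a smooth action on $M = X/G$. Since $X$ is simply connected in both cases, \proref{1conncon-bis} already yields that $M$ is homeomorphic to $S^4$; the remaining task is to identify the smooth structure by appealing to classification theorems for group actions, which handle the present situation without invoking the results of Freedman or Cerf.

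For case (i), the group $SO(3)$ acts smoothly on $\CP^2$ by $[v] \mapsto [Av]$, where $A \in SO(3)$ acts on $\bbr^3 \subset \bbc^3$ via the standard representation. Since $A$ is real, this action commutes with complex conjugation and so descends to a smooth $SO(3)$-action on $M$. The fixed surface $X^G = \RP^2$ is the orbit $SO(3)/O(2)$, and a direct analysis of orbits $[a+ib]$ (normalized so that $a\perp b$ and $|a|^2+|b|^2=1$) shows the $SO(3)$-action on $\CP^2$ is of cohomogeneity one, with one additional singular orbit $SO(3)/SO(2) \cong S^2$ on which $G$ acts freely (producing another copy of $\RP^2$ in $M$). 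We then appeal to Bredon's classification \cite{bredon1} of smooth $SO(3)$-actions on closed 4-manifolds, and compare the descended orbit data on $M$ with that of a standard $SO(3)$-action on $S^4$, for example the representation on trace-free symmetric 2-tensors on $\bbr^3$. Since $M$ is a $\bbz_2$-homology 4-sphere, the identification of orbit data forces $M$ to be equivariantly, and hence smoothly, diffeomorphic to $S^4$.

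For case (ii), the smooth $T^2 = SO(2) \times SO(2)$-action on $S^2 \times S^2$ where each $SO(2)$ factor rotates one sphere around an axis perpendicular to the reflecting plane of the corresponding conjugation commutes with the diagonal involution $\tau$, and hence descends to a smooth $T^2$-action on $M$. On $S^2 \times S^2$ this $T^2$-action has four isolated fixed points (the pairs of axis-poles), which are paired up by $\tau$ to yield exactly two $T^2$-fixed points in $M$; the fixed torus $X^G \cong T^2$ becomes a principal orbit in $M$; and the four singular 2-spheres (where one factor is at an axis-pole) are identified in pairs into two singular 2-spheres joining the two fixed points. Orlik--Raymond's classification \cite{orlik-raymond1} of smooth $T^2$-actions on closed simply connected 4-manifolds shows that such a manifold is determined up to equivariant diffeomorphism by its weighted orbit data, and any simply connected example is a connected sum of copies of $S^4$, $\CP^2$, $\overline{\CP}^2$ and $S^2 \times S^2$. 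Since $M$ has the $\bbz_2$-cohomology of $S^4$, the only possibility is $M \cong S^4$ smoothly.

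The main obstacle is the careful bookkeeping of the descended orbit structure on $M$ and its matching with a standard model in each classification, which is required to rule out the appearance of non-standard smooth structures or non-sphere underlying manifolds. Once the orbit data are matched, both the Bredon and Orlik--Raymond classifications yield an equivariant diffeomorphism of $M$ with the standard smooth $S^4$.
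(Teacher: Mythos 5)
Your proposal is correct and follows essentially the same route as the paper: for $\CP^2$ the commuting $SO(3)$-action descends to the quotient (this descent is exactly \lemref{smoothquo-equiv}, which you should cite explicitly, since by \remref{R.nonfunct} the quotient smooth structure is not functorial) and Bredon's cohomogeneity-one classification identifies $X/G$ with the standard $S^4$, while for $S^2\times S^2$ your descended $T^2$-action together with the Orlik--Raymond classification is precisely the paper's Example~\ref{classic2} (the paper also offers an explicit-formula alternative in Example~\ref{E.s2xs2}). The only quibble is your opening appeal to \proref{1conncon-bis}, which rests on Freedman's theorem: it is unnecessary (only simple connectivity of $X/G$ is needed for Orlik--Raymond, and that follows from the Van Kampen argument alone) and runs against the stated purpose of this proposition, namely to avoid the deep results of Freedman and Cerf.
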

\begin{proof} There is a smooth $SO(3)$-action on $X=\CP^2$ which commutes with complex conjugation. By \lemref{smoothquo-equiv}, the quotient space $X/G$ inherits the structure of a smooth $SO(3)$-manifold. The classification of smooth cohomogeneity one actions of $SO(3)$ by Bredon \cite[Theorem VI.6.3]{bredon1} shows that $(X/G, SO(3))$ is $SO(3)$-equivariantly diffeomorphic to the standard $SO(3)$-action on $S^4$. Therefore $X/G$ is diffeomorphic to $S^4$. The case $X=S^2\times S^2$ is done in Example \ref{E.s2xs2} and in Example \ref{classic2}.
\end{proof}

To describe other examples for \proref{1conncon-bis}, we start with a technique of Mazur and Zeeman \cite{zeeman1}.
Let $N$ be a smooth oriented $3$-dimensional $\bbz_2$-homology sphere. Let $h\:N\to N$ be
a diffeomorphism such that
\begin{enumerate}
\item $h$ preserves the orientation.  
\item $h$ has a fixed point.
\end{enumerate}
Let $T_h=N\times [0,1]/\{(x,1)\sim (h(x),0)\}$ 
be the mapping torus of $h$. Choose a fixed point $x_0\in N$ for $h$. 
The map $\psi_0\:[0,1]\to T_h$ given by $\psi_0(t)=[x_0,t]$ is a parametrisation
of a circle $S$ in $T_h$. The normal bundle of $S$ is trivial by condition (1).
We can then choose a parametrisation 
$\psi\:[0,1]\times D^3\to T_h$ of a tubular neighbourhood of $S$ extending $\psi_0$. 
We consider the
the surgery using $\psi$, producing a smooth $4$-manifold
\eqncount
\begin{equation}\label{Eq-surg}
 M_{h,\psi} = [T_h - \psi([0,1]\times {\rm int\, } D^3)] \, \cup_{\dot\psi} D^2\times S^2 \, ,
\end{equation}
where $\dot\psi$ is the restriction of $\psi$ to $[0,1]\times S^2$.
The manifold $M_{h,\psi}$ contains $\{0\}\times S^2$, so we get a pair
 $(M_{h,\psi},S^2)$.

\begin{Lemma}\label{L.surgery} \textcolor{white}{.}
\begin{enumerate}
\item $(M_{h,\psi},S^2)$ is a \ztk.
\item If $N$ is a $\bbz$-homology sphere, then $M_{h,\psi}$ is a $\bbz$-homology sphere.
\item The fundamental group of $M_{h,\psi}$ is isomorphic to 
the quotient of $\pi_1(N)$ by the relations $x=h_*(X)$ for all $x\in \pi_1(N)$.
\item The fundamental group of the associated conjugation $4$-manifold $\widehat M_{h,\psi}$ 
is isomorphic to the quotient of $\pi_1(N)$ by the relations 
$x=h^2_*(X)$ for all $x\in \pi_1(N)$, where $h^2=h\pcirc h$.
\end{enumerate}
\end{Lemma}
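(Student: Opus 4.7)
For assertions (1) and (2), the plan is a direct homology computation. The first step is to compute $H_*(T_h)$ using the Wang long exact sequence of the fibration $T_h\to S^1$ with fiber $N$. Since $h$ preserves orientation and $N$ is a $\bbz_2$-homology sphere (resp.\ an integral homology sphere), the endomorphism $1-h_*$ vanishes on every homology group of $N$, so the Wang sequence splits into short exact sequences and shows that $T_h$ has the $\bbz_2$- (resp.\ $\bbz$-) homology of $S^1\times S^3$. Next I would apply Mayer-Vietoris to the surgery decomposition $M_{h,\psi} = A\cup (D^2\times S^2)$, glued along $S^1\times S^2$, where $A = T_h - \psi([0,1]\times {\rm int\,} D^3)$. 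Because the circle $S$ is a section of the fibration $T_h\to S^1$, it represents a generator of $H_1(T_h)$, so attaching the simply-connected piece $D^2\times S^2$ kills $H_1$. The identity $\chi(M_{h,\psi}) = \chi(T_h)+2 = 2$, combined with Poincar\'e duality on the closed orientable manifold $M_{h,\psi}$, forces the middle Betti number to vanish, and the universal coefficient theorem rules out torsion in $H_2$ once $H_1=0$. Hence $M_{h,\psi}$ is a $\bbz_2$-homology (resp.\ integral homology) $4$-sphere.

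For (3), I would apply Van Kampen to the fibration $T_h\to S^1$, obtaining the HNN presentation
\[
\pi_1(T_h) \,=\, \langle \pi_1(N),\,t \mid txt^{-1} = h_*(x),\ x\in\pi_1(N)\rangle,
\]
in which the stable letter $t$ is represented by the circle $S$. Since $S^1\times D^3$ and its boundary $S^1\times S^2$ both have $\pi_1=\bbz$ generated by the axis, a second Van Kampen application to $T_h = A\cup(S^1\times D^3)$ gives $\pi_1(A) \cong \pi_1(T_h)$. Gluing in the simply connected $D^2\times S^2$ to form $M_{h,\psi}$ imposes the single relation $t=1$, yielding $\pi_1(M_{h,\psi}) = \pi_1(N)/\langle x = h_*(x),\ x\in\pi_1(N)\rangle$.

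For (4), the plan is to combine \proref{1compPi1} with the preceding analysis. The complement $M_{h,\psi} - S^2$ deformation retracts onto $A$, so $\pi_1(M_{h,\psi} - S^2) = \pi_1(A)$. Geometrically, a meridian $m$ of the branch locus $\{0\}\times S^2\subset D^2\times S^2$ is the boundary of a normal disk $D^2\times\{p\}$; under the surgery identification $\partial(D^2\times S^2) = S^1\times S^2 = \partial A$ this meridian corresponds to the axis of the original tubular neighborhood, hence to $t\in\pi_1(A)$. The unique epimorphism $\phi\colon\pi_1(A)\to\bbz_2$ of \proref{1compPi1} therefore sends $t\mapsto 1$, and since $H_1(N;\bbz_2)=0$ it vanishes on $\pi_1(N)$. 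A Reidemeister-Schreier computation with coset representatives $\{1,t\}$ exhibits $\ker\phi$ with Schreier generators $\pi_1(N)\cup\{s\}$, where $s=t^2$, and rewritten relations $sxs^{-1} = h_*^2(x)$ for $x\in\pi_1(N)$. Applying \proref{1compPi1} to kill $m^2=s$ then gives
\[
\pi_1(\widehat M_{h,\psi}) \,=\, \pi_1(N)/\langle x = h_*^2(x),\ x\in\pi_1(N)\rangle.
\]

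The main technical point is the geometric identification of the meridian of the branch locus in $M_{h,\psi}$ with the HNN stable letter $t$, and the ensuing identification of $\ker\phi$ with the HNN extension of $\pi_1(N)$ by $h_*^2$; once these correspondences are in hand, the rest reduces to routine Wang-sequence, Mayer-Vietoris, and Reidemeister-Schreier arguments.
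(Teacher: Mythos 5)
Your argument is correct, and for parts (1)--(3) it is essentially the paper's proof: homology of $T_h$ computed from the fibration over $S^1$ (Wang sequence versus Serre spectral sequence is immaterial), Mayer--Vietoris for the surgery decomposition, and the HNN presentation of $\pi_1(T_h)$ with Van Kampen killing the normal closure of the stable letter. The only genuine divergence is in part (4): the paper identifies the double cover $\widetilde L$ of the knot complement geometrically as the mapping torus of $h^2$ minus a tube, so that $\widehat M_{h,\psi}=M_{h^2,\hat\psi}$ as in \eqref{hat-remark}, and then simply reapplies part (3) to $h^2$; you instead invoke \proref{1compPi1} and compute $\ker\phi$ algebraically by Reidemeister--Schreier, recognizing it as the HNN extension of $\pi_1(N)$ by $h_*^2$ with stable letter $t^2$, which is the group-theoretic shadow of the same covering-space fact. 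Both routes give the stated fundamental group (your identification of the meridian with the stable letter $t$, up to conjugacy, matches the paper's choice of $m$), but the paper's version yields the stronger statement \eqref{hat-remark} that the branched cover is again a manifold of the form $M_{h^2,\hat\psi}$, which is exploited later (e.g.\ in \exref{Ex.Delta2} to conclude $\widehat M_{h,\psi}\approx S^4$); your purely algebraic derivation does not recover that diffeomorphism statement, though the lemma as stated does not require it.
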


\begin{proof}
By the Serre spectral sequence of the bundle $N\to T_h\to S^1$, the homology of $T_h$ is isomorphic
to that of $N\times S^1$. Conclusions (1) and (2) then follow from the Mayer-Vietoris
sequence of the decomposition~\eqref{Eq-surg}. 

For the fundamental group, choose a base point $\tilde y_0\in S^2$ and let 
$y_0=\dot\psi(\tilde y_0,0)\in N\subset T_h$. The map $\psi_1(t)=\dot\psi(\tilde y_0,t)$
represents an element $m\in \pi_1(T_h,y_0)$ and the fundamental group
of $T_h$ is the HNN-extension 
$$
\pi_1(T_h,y_0) \approx \ \langle\, \pi_1(N), m \,;\,  
mxm^\mun= h_*(x), \, x\in \pi_1(N)\,\rangle 
$$
(we use the notations of \cite[\S~IV.2]{lyndon-schupp1}). By the Van Kampen theorem applied
to the decomposition~\eqref{Eq-surg}, 
$\pi_1(M_{h,\psi},y_0)$ is the quotient of $\pi_1(T_h,y_0)$ by the normal closure of $m$.
This proves (3). 

To prove (4), let $L= M_{h,\psi} - ({\rm int\,}D^2\times S^2)=
T_h - \psi([0,1]\times {\rm int\, } D^3)$. 
The element $m$ is a meridian and 
the epimorphism $\phi\:\pi_1(M_{h,\psi}-S^2,y_0)\to \bbz_2$ sends $m$ to the generator
and $\pi_1(N)$ to $0$. Thus, $\widetilde L$ is the mapping torus of $h^2$ and there is
a decomposition
\eqncount
\begin{equation}\label{hat-remark}
\widehat M_{h,\psi}= M_{h^2, \hat\psi} =  T_{h^2}\cup D^2\times S^2
\end{equation}
for some parametrisation $\hat\psi$ analogous to~\eqref{Eq-surg}.
Conclusion~(4) follows from Van Kampen's theorem.
\end{proof}

Examples of $3$-dimensional $\bbz_2$-homology spheres are given by cyclic branched coverings of classical
knots. In such case, following results of Zeeman
\cite[Corollary~1, p.~486]{zeeman1}
and Gordon \cite[Theorem~3.1]{gordon2}, Pao proved the following result
(see \cite[\S~3]{pao1}).

\begin{Lemma}\label{L.surgery2}
Let $N$ be the $p$-fold cyclic covering $S^3$ branched over a knot. 
Let $h\:N\to N$ be the diffeomorphism corresponding to the action of a generator of $\bbz_p$.
Then $M_{h,\psi}$ is diffeomorphic to $S^4$ for any $\psi$.  \mancqfd
\end{Lemma}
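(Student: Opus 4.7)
The approach is to recognise $M_{h,\psi}$ as the ambient $4$-manifold of Zeeman's $p$-twist-spin construction applied to the knot $K\subset S^{3}$ whose $p$-fold cyclic branched cover is $N$. First I would set up a nice local model near $x_{0}$: because $h$ has finite order $p$ and fixes $x_{0}$, it may be smoothly linearised on a small closed $h$-invariant disc $D^{3}\subset N$ through $x_{0}$, on which $h$ acts by rotation of angle $2\pi/p$ about the diameter $C\cap D^{3}$ (the fixed-point set $C$ of $h$ is a circle, the preimage of $K$). Writing $N':=N-\mathrm{int}(D^{3})$, one obtains a canonical ``product'' framing $\psi^{0}$ of the tubular neighbourhood of $S\subset T_{h}$ for which
\[
T_{h}\setminus\psi^{0}\bigl([0,1]\times\mathrm{int}(D^{3})\bigr)\ \cong\ T_{h|_{N'}},
\]
the mapping torus of $h|_{N'}$; its boundary is the mapping torus of a rotation of $S^{2}=\partial D^{3}$, which is diffeomorphic to $S^{1}\times S^{2}$ since $SO(3)$ is connected.

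Next I would apply Zeeman's fibration theorem \cite[Cor.~1, p.~486]{zeeman1}: the complement in $S^{4}$ of the $p$-twist spun $2$-knot $\tau_{p}K$ fibres over $S^{1}$ with fibre $N'$ and monodromy $h|_{N'}$. Reattaching a tubular neighbourhood of $\tau_{p}K$ (diffeomorphic to $D^{2}\times S^{2}$) reconstructs $S^{4}$, so the previous identification yields $M_{h,\psi^{0}}\cong S^{4}$, with the embedded $2$-sphere $\{0\}\times S^{2}$ realised as $\tau_{p}K$.

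Finally I would handle the dependence on $\psi$. Any two parametrisations $\psi,\psi'$ of the tubular neighbourhood of $S$ differ, up to homotopy, by an element of $\pi_{1}(SO(3))=\bbz_{2}$; changing the framing by the non-trivial element replaces the attaching map of $D^{2}\times S^{2}$ by the one twisted by the Gluck construction along the embedded sphere $\{0\}\times S^{2}=\tau_{p}K$. By Gordon's theorem \cite[Theorem~3.1]{gordon2}, the Gluck twist of $S^{4}$ along a twist-spun $2$-knot is again $S^{4}$, so $M_{h,\psi}\cong S^{4}$ for every $\psi$. The main obstacle is precisely this framing-independence step: it genuinely depends on Gordon's non-trivial theorem about Gluck twists of twist-spun knots, whereas the identification with Zeeman's twist-spin model for the standard framing is comparatively formal.
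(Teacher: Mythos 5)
Your argument is correct and is essentially the paper's own justification: the paper gives no independent proof but cites Pao's argument, which rests precisely on Zeeman's fibration theorem for twist-spun knots and Gordon's Theorem~3.1 on Gluck surgery along them. Identifying the surgered mapping torus with the twist-spin exterior plus a reglued $D^2\times S^2$, and disposing of the $\pi_1(SO(3))=\bbz_2$ framing ambiguity via Gordon, is exactly the intended route.
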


\begin{example}\label{Ex.lensSp2} 
Let $N=L(p,q)$ be a $3$-dimensional lens space with $p$ odd. 
By \cite[Satz~6]{schubert1}, $N$ is the $2$-fold branched covering
of $S^3$ for some knot. By \lemref{L.surgery2}, this gives an involution $h$ of $N$
for which $M_{h,\psi}$ is diffeomorphic to $S^4$ (but $\pi_1(\widehat M_{h,\psi})\approx \bbz_p$).
\end{example}

\begin{example}\label{Ex.Delta2} 
Let $N$ be the Poincar\'e homology sphere
$$
N = SO(3)/A_5
$$ 
with $\pi_1(N)=\Delta$ the binary icosahedral group, defined as the universal central
extension of $A_5$. The conjugation in $SO(3)$ by an element $b\in A_5$ produces
a diffeomorphism $h\:N\to N$. The induced automorphism $h_*$ of $\Delta$ is the
conjugation by an element $\tilde b$ over $b$. 
\lemref{L.surgery} has the following consequences on the manifolds
$M_{h,\psi}$ and $\widehat M_{h,\psi}$ (for any choice of $\psi$).
\begin{itemize}
 \item $M_{h,\psi}$ is an integral homology sphere.
\item If $b$ is not trivial, $M_{h,\psi}$ is a homotopy sphere.
\item If $b^2$ is not trivial, $\widehat M_{h,\psi}$ is a homotopy sphere.
Indeed, the relations $x=b^2xb^{-2}$ kills $A_5$ which is simple,
so the relations $y=\tilde b^2y\tilde b^{-2}$ kills $\Delta$. 
\end{itemize}
It is classical that $N=SO(3)/A_5$ is the $5$-fold covering of $S^3$ branched over the trefoil
knot \cite[\S~65]{seifert-threlfall}. By \lemref{L.surgery2} and (\ref{hat-remark}), 
$M_{h,\psi}$ and $\widehat M_{h,\psi}$ are both diffeomorphic to $S^4$
for some choice of $b$ of order $5$.
\end{example}

\begin{remark} 
\exref{Ex.Delta2} with $b^2\neq 1$ produces counter-examples to the generalised
Smith conjecture in dimension $4$: a smooth involution on a homotopy $4$-sphere
with a non-trivial knot (even topologically) as fixed point set. 
The first such examples appeared in 1966 in the work of Giffen \cite[Theorem 3.3]{giffen1}. Later Gordon \cite{gordon1}, \cite{gordon2} and
Sumners \cite{sumners1} proved that there are infinitely many 
non-equivalent knots in $S^4$ which are the fixed point sets of smooth involutions. In addition, 
Cameron Gordon (email communication) has informed us that the quotient spaces $S^4/G$ of his examples
are  obtained from $S^4$ by a ``Gluck surgery" construction on a (twist-spun) knot, and hence are diffeomorphic to $S^4$ by \cite[Theorem 3.1]{gordon2}.
\proref{L.conjSn} now  implies the following statement:
\end{remark}

\begin{Proposition}\label{P.incconS4}
There are infinitely many smooth conjugations on $S^4$ which are topologically inequivalent.
\mancqfd 
\end{Proposition}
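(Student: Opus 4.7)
The plan is direct. The remark immediately preceding the proposition recalls the Gordon--Sumners construction of an infinite family of smooth involutions $\tau_i\colon S^4 \to S^4$ ($i\in\bbn$) whose fixed point sets $\Sigma_i = (S^4)^{\tau_i}$ are pairwise topologically inequivalent knotted $2$-spheres in $S^4$. These are the only input from outside.

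First I would verify that each $(S^4,\tau_i)$ is a smooth conjugation $4$-manifold. Since $H^\ddstar(S^4) \approx H^\ddstar(S^{4})$ and $H^\ddstar(\Sigma_i) \approx H^\ddstar(S^2)$, this is immediate from \proref{L.conjSn} with $n=2$, which supplies the required $H^\ddstar$-frame; smoothness of $\tau_i$ is part of the Gordon--Sumners input. (By Gordon's observation, the quotients $S^4/\tau_i$ are moreover diffeomorphic to $S^4$, so each corresponds, via Theorem~A, to a genuine $\bbz_2$-knot in $S^4$, though this is not strictly needed for the proof.)

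Next I would establish pairwise topological inequivalence of the $\tau_i$. Suppose for contradiction that for some $i\neq j$ there is a $G$-equivariant homeomorphism $f\colon (S^4,\tau_i)\to (S^4,\tau_j)$. Then $f$ is an ambient homeomorphism of $S^4$ that carries $\Sigma_i$ bijectively onto $\Sigma_j$, exhibiting a topological equivalence of the embedded pairs $(S^4,\Sigma_i)$ and $(S^4,\Sigma_j)$. This contradicts the Gordon--Sumners inequivalence of the knots $\Sigma_i$.

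No serious obstacle appears: the deep topological work of producing infinitely many inequivalent fixed-set knots has already been done by Gordon and Sumners, and the only new observation is that the conjugation space structure on each $(S^4,\tau_i)$ is automatic from the cohomological data via \proref{L.conjSn}, so topological inequivalence of the fixed knots lifts verbatim to topological inequivalence of the associated conjugations.
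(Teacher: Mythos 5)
Your proposal is correct and is essentially the paper's own argument: the paper likewise takes the Gordon--Sumners smooth involutions on $S^4$ whose fixed sets form an infinite family of pairwise inequivalent knotted $2$-spheres, invokes \proref{L.conjSn} to conclude each is a conjugation, and gets topological inequivalence because an equivariant homeomorphism must carry fixed set to fixed set. As you note, the remark about the quotients being obtained by Gluck surgery (hence diffeomorphic to $S^4$) is supplementary and not needed for the proposition itself.
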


\begin{remark}
Further examples of smooth conjugation $4$-manifolds might also come from the \emph{rim surgery} construction of  Fintushel and Stern \cite{fintushel-stern98}, \cite{fintushel-stern-sunukjian1}, although it is not clear at present how to detect exotic smooth structures on the $2$-fold branched coverings of $\bbz_2$-homology $4$-spheres. 
\end{remark}

\section{Conjugations on topological $4$-manifolds}\label{S.top}

We first state a topological version of Theorem A. The involution in a topological conjugation
manifold is supposed to be \textit{locally linear} (also called \textit{locally smooth} 
in \cite[Chapter~IV]{bredon1}). We also consider topological
locally flat
\ztk s. Two of them, $(M,\Sigma)$ and $(M',\Sigma')$ are {\it topologically equivalent}
if there is an orientation-preserving homeomorphism $h\:M\to M'$
such that $h(\Sigma)=\Sigma'$.

\begin{thma-top}
The correspondence $X\mapsto (X/G,X^G)$ defines a bijection between
\renewcommand{\labelenumi}{(\alph{enumi})}
\begin{enumerate}
\item the orientation-preserving $G$-homeomorphism classes of oriented
connected topological conjugation $4$-manifolds,
and
\item
the topological equivalence  classes of topological locally flat \ztk s.
\end{enumerate}
\end{thma-top}

\begin{proof}
The orbit space $X/G$ of a locally linear action is a closed topological $4$-manifold, and the image $p(X^G)$ of the fixed set is a locally flat submanifold of $X/G$. By \cite[\S~9.3]{freedman-quinn1}, this submanifold admits a normal bundle and therefore a tubular neighbourhood. Lifting this tubular neighbourhood to $X$ gives a $G$-invariant tubular neighbourhood for $X^G$ in $X$. Similarly, a locally flat submanifold $\Sigma$ of $M$ admits a tubular neighbourhood.
The proofs of Lemmas~\ref{con2knot} and~\ref{knot2con} can be carried out using these tubes (using \remref{R.finGsp} as noted to justify the application of \proref{P.ef}).
The existence of the $2$-fold branched covering 
$\widehat M\to M$ is guaranteed by \cite[Prop.~2]{lines1}.

The arguments of the proof of Theorem~A (end of \secref{S.proofThmA}) are much simpler
than in the smooth case. They come from the fact that the constructions
under consideration are functorial for homeomorphisms.
For $X\mapsto X/G$, this is obvious. For $M\mapsto \widehat M$, this follows from
\cite[Prop.~3]{lines1}.
\end{proof}

\begin{Examples} \rm
Finashin, Kreck and Viro  \cite{finashin-kreck-viro1} have constructed an infinite family of
topologically equivalent, but smoothly inequivalent embeddings of
$\Sigma= \Sharp_{10} \RP^2$ in $S^4$.
In all these examples, the fundamental group of the complement is just $\bbz/2$.
By Theorem~A, this gives an infinite family of smooth conjugation $4$-manifolds
which are topologically equivalent by Theorem B but not diffeomorphic (since the associated conjugation $4$-manifolds are non-diffeomorphic  Dolgachev surfaces).

Other examples, this time of topologically equivalent, but smoothly inequivalent knotted surfaces in $\CP^2$, were constructed by Finashin \cite{finashin1}.
\end{Examples}

\begin{remark} 
Non-oriented versions of Theorems A and B hold: just leave out
the words ``orientation preserving'' and ``oriented'' in (a) and in the
definition of \ztk s. For instance, $(M,\Sigma)$ is equivalent to
$(-M,\Sigma)$. This is definitely a coarser equivalence relation, as seen
in \proref{P.rpor}. 
\end{remark}

\section{Appendix: Branched coverings and smooth structures}\label{S.bran}

Branched covering spaces of manifolds is a classical topic in geometric topology, which appears frequently in the literature (for example, see the references cited in Durfee and Kauffman \cite{durfee-kauffman1} and Lines \cite{lines1}). By a \emph{$2$-fold branched covering} we mean a ramified $2$-fold covering in which the branch locus is a closed submanifold of codimension two. 

In the proof of Theorem A, we need to know the relationship between smooth structures on the total space and quotient space of a $2$-fold branched covering of $4$-manifolds. This material may be well-known, but we were not able to find the right references for our proofs. 
Smooth manifolds are so strikingly different from topological manifolds in dimension $4$, that these issues perhaps deserve some extra attention.  This section is included to provide a detailed account, as a service to the reader.

\smallskip
The map $z\mapsto z^2$ from $\bbc$ to $\bbc$ is the simplest example of a 
$2$-fold branched covering, with $\{0\}$ being the (strict) branched locus.
We will need a precise local description of this example. Let $D$ be the unit disk in $\bbc$. 
Identify $SO(2)$ with $S^1$ and $O(2)$ with the $\bbr$-linear isometries of $\bbc$.
The homomorphism $\gamma\mapsto\gamma^2$ of $S^1$ extends to a smooth epimorphism
\eqncount
\begin{equation}\label{E-defpsi}
 \psi\:O(2)\to O(2) \, .
\end{equation}
Let $P\to K$ be a smooth principal $O(2)$-bundle.
Consider the two Borel constructions
$$
P\times_{O(2)} D = P\times D \big/ \{(a\,\alpha,z)=(a,\alpha\,z)\mid \alpha\in O(2)\}
$$
and 
$$
P\bar\times_{O(2)} D = P\times D \big/ 
\{(a\,\alpha,z)=(a,\psi(\alpha)\,z)\mid \alpha\in O(2)\} \, .
$$
The map $(a,z)\mapsto (a,z^2)$  descends to a smooth surjection
\eqncount
\begin{equation}\label{E-bran}
 q\: P\times_{O(2)} D \to P\bar \times_{O(2)} D 
\end{equation}
which will be our local model for a $2$-fold  branched covering with branched locus $K$.
The general definition is the following.

\begin{definition}\label{def-bran} 
Let $M$ be a smooth manifold of dimension $n$ with a codimension $2$ submanifold $N$. A smooth map $p\: (X,Y) \to (M,N)$ is a ($2$-fold) \textit{branched covering}
with \textit{branched locus} $N$  if 
\begin{itemize}
\item $p\:X- Y\to M-N$ is a smooth $2$-fold covering, in particular a local diffeomorphism.
\item $p\: Y\to N$ is a diffeomorphism. We often identify $Y$ with $N$ via $p$.
\item there are closed tubular neighbourhoods $Y\subset \tilde V\subset X$ and $N\subset V\subset M$
for $Y$ and $N$ such that $p(\tilde V)=V$ and $p|\tilde V$ has, up to diffeomorphism,
the form of~\eqref{E-bran}.
\end{itemize}
\end{definition}

This definition may be compared with the properties of smooth branched coverings 
given in Durfee and Kauffman \cite[Prop.~1.1]{durfee-kauffman1}. On one hand, it is simpler
because we are dealing with the special case of $2$-fold coverings. On the other hand, Definition~\ref{def-bran}
is more precise: we specify  the model around the branched locus (compare  
part~(ii) of Proposition~1.1 in~\cite{durfee-kauffman1}).

\begin{ccote}\textbf{Quotient structure}.
Our first task is to show how a smooth $G$-action on a closed manifold $X$ determines 
a smooth structure on the quotient space $X/G$, which is unique up to diffeomorphism. 
\begin{Lemma}\label{smoothquo}
Let $X$ be a smooth  $G$-manifold such that  the fixed point set $X^G$ is 
a closed manifold of codimension $2$. 
Then, 
\begin{enumerate}
\item $X/G$ admits the structure of a smooth manifold such that
$p\:X\to X/G$ is a branched covering with branched locus $p(X^G)$.
\item if $X$ is a closed manifold, any two such structures on $X/G$ are diffeomorphic. 
\end{enumerate}
\end{Lemma}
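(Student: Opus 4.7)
The plan for part (1) is to patch a smooth structure on the free part of the action with the local model of Definition \ref{def-bran} near the branch locus. First I would average an arbitrary Riemannian metric on $X$ to obtain a $G$-invariant one, and write $\nu\to X^G$ for the rank-$2$ normal bundle. The differential of $\tau$ along $X^G$ gives a smooth orthogonal $G$-action on $\nu$ whose only fixed points are the zero section; since $-I$ is the unique fixed-point-free orthogonal involution on $\bbr^2$, this action is fibrewise $-I$. Writing $P$ for the orthonormal frame bundle of $\nu$, the equivariant exponential map identifies a closed disk bundle with a $G$-invariant tubular neighbourhood $\tilde V \cong P\times_{O(2)} D$ on which $G$ acts via $-I\in\ker\psi$. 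Hence $\tilde V/G$ is identified with $P\bar\times_{O(2)} D$, which carries a natural smooth structure as the $D$-bundle over $X^G$ associated with $P$ and the $O(2)$-action through $\psi$. Away from $X^G$ the action is free, so $(X\setminus X^G)/G$ inherits a unique smooth structure for which $p$ is a local diffeomorphism.

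These two partial smooth structures must then be checked to agree on the overlap, which is the punctured disk bundle $\tilde V\setminus X^G$. In each trivialising fibre chart the quotient map is $z\mapsto z^2$ on $D\setminus\{0\}$, which is a smooth local diffeomorphism; hence the smooth structures coincide. Gluing yields a smooth structure on $X/G$ for which $p$ is a branched covering matching the local model \eqref{E-bran} of Definition \ref{def-bran}.

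For part (2), suppose $s_1$ and $s_2$ are two such smooth structures on $X/G$. The branched-covering hypothesis furnishes, for each $i$, a tubular neighbourhood $V_i\subset(X/G,s_i)$ of $N=p(X^G)$ whose preimage $\tilde V_i=p^{\mun}(V_i)$ is a $G$-invariant tubular neighbourhood of $X^G$ in $X$. Because $X$ is closed, the equivariant tubular neighbourhood theorem (Bredon \cite{bredon1}) yields a $G$-equivariant diffeomorphism $f\:X\to X$, isotopic to the identity through $G$-equivariant diffeomorphisms, carrying $\tilde V_1$ onto $\tilde V_2$. Its descent $\bar f\:(X/G,s_1)\to(X/G,s_2)$ is a homeomorphism which is smooth off $N$, both structures there being induced from $X$ via $p$. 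On the tubular neighbourhoods, $\bar f$ corresponds to the map of the bundles $P_i\bar\times_{O(2)} D$ induced by $f$ on the $G$-equivariant side, so it is smooth near $N$ as well. Hence $\bar f$ is the desired diffeomorphism of smooth manifolds.

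The main obstacle lies in part (1), in the smooth compatibility check at the overlap: one has to recognise that the quotient structure coming from the principal-bundle chart $P\bar\times_{O(2)} D$ agrees on punctured fibres with the free-covering structure induced by $p$, and this reduces to the elementary observation that $z\mapsto z^2$ is a smooth local diffeomorphism on $\bbc\setminus\{0\}$. The uniqueness statement in part (2) is then only mildly technical, relying on closedness of $X$ to invoke the equivariant uniqueness of tubular neighbourhoods.
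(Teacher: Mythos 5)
Your part (1) follows the paper's existence argument (invariant metric, frame bundle $P$ of the normal bundle, equivariant exponential chart, and the local model $q\:P\times_{O(2)}D\to P\bar\times_{O(2)}D$), and is fine. The genuine gap is in part (2), at the sentence claiming that the descent $\bar f$ of the equivariant diffeomorphism $f$ ``is smooth near $N$ as well'' because it corresponds to the induced map of the bundles $P_i\bar\times_{O(2)}D$. A $G$-equivariant diffeomorphism between invariant tubular neighbourhoods does \emph{not} in general descend to a smooth map of the quotients at the branch locus; this is exactly the non-functoriality pointed out in \remref{R.nonfunct}. Concretely, take the involution $-I$ on $\bbr^2=\bbc$ and the equivariant linear diffeomorphism $h(z)=az+b\bar z$ (e.g.\ $h(u,v)=(u,u+v)$); its descent through the quotient chart $z\mapsto z^2$ is $w\mapsto a^2w+2ab\,|w|+b^2\bar w$, which fails to be differentiable at $0$ unless $ab=0$, i.e.\ unless $h$ is fibrewise conformal or anticonformal. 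The equivariant tubular neighbourhood theorem gives you a $G$-diffeomorphism carrying $\tilde V_1$ onto $\tilde V_2$, but it gives no control forcing this map to be fibrewise conformal (or even fibrewise linear) near $X^G$, so the smoothness of $\bar f$ breaks down precisely at the points of $N$, which is the only place where anything needed to be proved.

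The paper circumvents this by never descending an arbitrary equivariant diffeomorphism. Instead it shows (Step 3) that any smooth structure on $X/G$ satisfying condition (1) is of the form $(X/G)_g$: one checks that the branched-covering chart $\varphi$ is $G$-equivariant, builds a $G$-invariant metric $g$ for which $\varphi$ is an isometry (hence the normal exponential map), and concludes the given structure coincides with $(X/G)_g$. Then (Step 2) metric independence is proved by interpolating $tg'+(1-t)g$ over $L=X\times[0,1]$, forming $(L/G)_{\check g}$, and integrating a gradient-like vector field for the submersion to $[0,1]$; closedness of $X$ enters there. If you want to salvage your approach, you would have to arrange the comparison map to be fibrewise orthogonal (or conformal) near the zero section -- which in effect amounts to redoing the paper's metric argument.
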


\begin{proof} 
Let $X_{free}=X-X^G$. 
The quotient map $p_{free}\:X_{free} \to X_{free}/G$ is a covering projection, and hence $X_{free}/G$
has a unique a smooth structure such that $p_{free}$ is a local diffeomorphism.
We shall put a smooth structure on a neighbourhood of $X^G$ in $X/G$
which agrees with that on $X_{free}$.

\st{1.  Existence} To a $G$-invariant Riemannian metric $g$ on $X$, we will  associate a smooth
structure $(X/G)_g$ on $X/G$ satisfying condition (1) of \lemref{smoothquo}.
Let $\nu$ be the normal bundle to $X^G$ in $X$ given by the metric $g$,
so $\nu_x= (T_xX^G)^\perp \subset T_xX$ for $x\in X^G$.
Let $P\to X^G$ be the 
$O(2)$-principal bundle associated to $\nu$, so $P_x$ is the space of orthonormal
frames in $\nu_x$. The space $P\times_{O(2)} D$ 
is a smooth $G$-manifold,  with the involution $\tau(a,z)=(a,-z)$.
It is $G$-diffeomorphic to the unit disk bundle associated to $\nu$

Let us perform the  equivariant tubular neighbourhood 
construction \cite[Chapter~VI, Theorem~2.2]{bredon1} using the exponential map for
the metric $g$. We say that $g$ is \textit{calibrated around $X^G$} if  
the exponential map is an embedding on $P\times_{O(2)} D$.
As $X^G$ is compact, one can multiply $g$ by a constant (scaling)
so that it is calibrated. Therefore, there exists a $G$-invariant 
neighbourhood $V$ of $X^G$ in $X$ and a $G$-equivariant diffeomorphism
$$
\varphi\: P\times_{O(2)} D \fl{\approx} V  \, .
$$ 
The map $q$ of~\eqref{E-bran} sits in a commutative diagram
$$
\begin{array}{c}{\xymatrix@C-3pt@M+2pt@R-4pt{%
 & P\times_{O(2)} D \ar@{>>}[dl]_(0.50){\pi}
\ar[dr]^(0.50){q}  
\\
P\times_{O(2)} D\big/G \ar[rr]^(0.50){h}_(0.50){\approx}  &&
P\bar\times_{O(2)} D
}}\end{array} \ .
$$ 
The map $h$ is a continuous bijection between compact spaces, and hence a homeomorphism.
Therefore, we get a commutative diagram
\eqncount
 \begin{equation}\label{E-smoothquo-20}
\begin{array}{c}{\xymatrix@C-3pt@M+2pt@R-4pt{%
P\times_{O(2)} D \ar[d]^(0.45){q}
\ar[r]^(0.59){\varphi}_(0.59){\approx}  &
V \ar[d]^(0.50){p}  \\
P\bar\times_{O(2)}  D\ar[r]^(0.53){\bar\varphi}_(0.53){\approx}  &
V/G
}}\end{array}
\end{equation}
where $\bar\varphi$ is a homeomorphism. As $q$ is smooth, the homeomorphism $\bar\varphi$
provides a smooth structure on $V/G$, which is a neighbourhood of $X^G$ 
in $X/G$, and the projection $p\:V\to V/G$ is smooth.
As $\varphi$ composed with the inclusion $V\hookrightarrow X$ is a smooth embedding,
the smooth structures on $X_{free}/G$ and on $V/G$ agree on $V/G-X^G$. 
This defines the structure $(X/G)_g$, which does not depends on the scaling.
By diagram~\eqref{E-smoothquo-20}, it satisfies condition~(1) of \lemref{smoothquo}.

\st{2. Metric independence} If $g$ and $g'$ are two $G$-equivariant Riemannian metrics on $X$, and if $X$
is a closed manifold,
then we will show that $(X/G)_g$ and  $(X/G)_{g'}$ are diffeomorphic. 
The family $tg' + (1-t)g$ ($t\in [0,1]$) defines a $G$-invariant Riemannian metric 
$\check g$ on the
manifold with boundary $L=X\times [0,1]$. By scaling $\check g$,
we may suppose that it is calibrated around $L^G$. 
The above construction provides a smooth structure $(L/G)_{\check g}$ (the presence
of boundaries does not create difficulties). Using diagram~\eqref{E-smoothquo-20},
one shows that the projection $b\:(L/G)_{\check g}\to [0,1]$ is a submersion.
As $X/G$ is a closed manifold, integrating a gradient-like vector field for $b$
provides a diffeomorphism between $b^\mun(0)=(X/G)_g$ and $b^\mun(1)=(X/G)_{g'}$.

\st{3. Uniqueness} 
Recall that all smooth structures on $X/G$ satisfying condition (1) of \lemref{smoothquo} will agree on
$X/G-X^G$. We have to show that this is the case  around $X^G$.

Suppose that $X/G$ is endowed with a smooth structure such that 
$p\:X\to X/G$ is  a $2$-fold branched covering with branched locus $p(X^G)=X^G$.
By definition, there is a neighbourhood $W$ of $X^G$ in $X$ and 
a commutative diagram
\eqncount
 \begin{equation}\label{E-smoothquo-40}
\begin{array}{c}{\xymatrix@C-3pt@M+2pt@R-4pt{%
P\times_{O(2)} D \ar[d]^(0.45){q}
\ar[r]^(0.59){\varphi}_(0.59){\approx}  &
W \ar[d]^(0.50){p}  \\
P\bar\times_{O(2)}  D\ar[r]^(0.53){\bar\varphi}_(0.53){\approx}  &
W/G
}}\end{array}
\end{equation}
where $\varphi$ and $\bar\varphi$ are smooth embeddings. If we restrict
these embeddings to $P\times_{O(2)} D^\ddstar$ and $P\bar\times_{O(2)} D^\ddstar$,
where $D^\ddstar=D-\{0\}$, then diagram~\eqref{E-smoothquo-40} is a morphism
of $2$-fold (unbranched) covering spaces. The deck transformation
on $P\times_{O(2)} D^\ddstar$ is given by $(a,z)\mapsto (a,-z)$. Hence,
$\varphi$ is a a $G$-equivariant embedding. 

Endow $P\times D$ with a Riemannian metric which is the product of
a $O(2)$-invariant Riemannian metric on $P$ with the standard metric on $D$.
This descends to a Riemannian metric on $P\times_{O(2)} D$, which is $G$-invariant.
One can construct a $G$-invariant Riemannian metric $g$ on $X$ so that
$\varphi$ is an isometry. Hence, $\varphi$ is actually the normal exponential map
and diagram~\eqref{E-smoothquo-40} plays the role of diagram~\eqref{E-smoothquo-20}
to define the smooth structure $(X/G)_g$ around $X/G$, via the homeomorphism $\bar\varphi$.
As $\bar\varphi$ is a smooth embedding, the given smooth structure on $X/G$ 
coincides with $(X/G)_g$.

Now, let $(X/G)'$ and  $(X/G)''$ be two smooth structures satisfying condition (1).
By the above argument, there are  $G$-invariant Riemannian metrics $g'$ and $g''$
on $X$ such that  $(X/G)'=(X/G)_{g'}$ and $(X/G)''=(X/G)_{g''}$. But
$(X/G)_{g'}$ and $(X/G)_{g''}$ are diffeomorphic, as seen in Step~2.
\end{proof}
\end{ccote}

\begin{Example}\label{E.s2xs2}\rm
Let us consider $S^2\subset\bbc\times\bbr$ with the involution $(z,t)\mapsto (z,-t)$
and let $X=S^2\times S^2$ endowed with the diagonal involution $\tau$. One can construct a smooth
$2$-fold branched covering $\pi\:X\to S^4$ by explicit formulas,  which descends to a homeomorphism
$\bar\pi\:X/G\fl{\approx} S^4$.  We remark that an application of \lemref{smoothquo} then gives the well-known result that $X/G$, with the smooth structure
of \lemref{smoothquo}, is diffeomorphic to $S^4$.

Our coordinates on $X$ will be $(w_1,w_2)$, where $w_j=(r_je^{i\theta_j},t_j)$ for $j=1,2$.
The standard $(S^1\times S^1)$-action on $X$ is defined by 
$$(z_1, z_2)\cdot (w_1,w_2) = (z_1r_1e^{i\theta_1},t_1, z_2r_2e^{i\theta_2},t_2)$$
for all 
$(z_1, z_2) \in S^1 \times S^1$.
Let $\tilde\pi\:X\to \bbc\times\bbc\times\bbr$ be defined by
\eqncount
\begin{equation}\label{s2xs2-eq10}
\tilde\pi(w_1,w_2) =
\big (r_1^2e^{i\theta_1},r_2^2e^{i\theta_2},t_1t_2\big ) \, .
\end{equation}
The function $L=L(w_1,w_2)=\|\tilde\pi(w_1,w_2)\|^2 = r_1^4 + r_2^4 + (t_1t_2)^2$ never vanishes,
so
$$
\pi(w_1,w_2)= \frac{1}{\sqrt{L}}\tilde\pi(w_1,w_2)
$$
defines a smooth map $\pi\:X\to S^4$, which is $(S^1\times S^1)$-equivariant.
One checks that $\pi$ descends to a homeomorphism $\bar\pi\:X/G\fl{\approx} S^4$.

It remains to show that $\pi\colon X \to S^4$ is a branched covering.
Let us consider the following diagram
\eqncount
\begin{equation}\label{s2xs2-eq20}
\begin{array}{c}{\xymatrix@C-3pt@M+2pt@R-6pt{%
X \ar[d]^(0.50){f}
\ar@{>>}[r]^(0.50){\pi}  &
S^4 \ar[d]^(0.50){\bar f}  \\
\bbc \ar[r]^(0.50){\hat\pi}  &
\bbc
}}\end{array}
\end{equation}
where
$$
f(w_1,w_2)=\frac{1}{\sqrt[4]{L}} (t_1+it_2) \quad , \quad
\bar f (\rho_1 e^{i\theta_1},\rho_2 e^{i\theta_2},t) =
\rho_2-\rho_1 + 2it
$$
and $\hat\pi(z)=z^2$. As $r_j^2+t_j^2=1$, one has $t_1^2-t_2^2=r_2^2-r_1^2$ and
diagram~\eqref{s2xs2-eq20} commutes. Observe that $f$ and $\bar f$ are
$(S^1\times S^1)$-invariant.

Derivative computations show that $0\in\bbc$ is a regular value for $f$.
This produces a  $(S^1\times S^1)$-invariant trivialization of the normal bundle
to $X^G=f^\mun(0)$, since $X^G$ is a free $(S^1\times S^1)$-orbit. Also, $0\in\bbc$ is a regular value for $\bar f$: it is easy to find a smooth local section of $\bar f$
into the $(S^1\times S^1)$-slice $\theta_1=\theta_2=0$. This again produces a  $(S^1\times S^1)$-invariant trivialization of  the normal bundle
to $N=\pi(X^G)=\sqrt{2}(e^{i\theta_1},e^{i\theta_2},0)$.

Let $D'$ be a small disk around $0$ in the image of $f$ and let $D''=\hat\pi(D')$.
Using homotheties from $D$ to $D'$ and $D''$ together with the above trivializations
permits us to put  the map $\pi$ into the form~\eqref{E-bran} locally around $X^G$.
\end{Example}

\begin{Remark}\label{R.nonfunct}
\rm 
Statement (2) of \lemref{smoothquo} does not say that the diffeomorphism type of $X/G$ is
functorial. If  $h\:X\to X'$ is a $G$-equivariant diffeomorphism, the induced homeomorphism
$\bar h\:X/G\to X'/G$ is in general not smooth. For example, take the
standard involution $(u,v)\mapsto (-u, -v)$ in $\bbr^2$ and the map $h(u,v)=(u,u+v)$.
The induced map $\bar h\: \bbr^2\to\bbr^2$
is is determined by the equation $\bar h\pcirc q=q\pcirc h$,
where $q\: \bbr^2\onto\bbr^2$ is the complex squaring map $q(u,v) = (u^2-v^2, 2uv)$.
Hence, 
$\bar h (x,0) = (0,2x)$, if $x \geq 0$, and $\bar h (x,0)= (x,0)$, if $x<0$. 
In particular, $\bar h$ is not differentiable at $x=0$. The non-compactness of $\bbr^2$ is not 
the point: one can transport this example onto the Riemann sphere. 
\end{Remark}

\begin{remark}
The smooth structure given by \lemref{smoothquo}  on $X/G$ is not the same as the
\emph{functional smooth structure}  on $X/G$ induced by the orbit map (see Bredon  
\cite[p.~301]{bredon1}), nor is it the same as the smooth stratifold
structure  induced by the $G$-action on $X$ (see Kreck \cite{kreck-book1}).
Both of these structures are functorial, unlike the structure given by \lemref{smoothquo}, but neither one gives $X/G$ the structure of a smooth manifold.
\end{remark}

In spite of \remref{R.nonfunct}, one has the following uniqueness result.

\begin{Lemma}\label{smoothquoUNI}
Let $X$ and $X'$ be two smooth closed  $G$-manifolds with codimension $2$ fixed point sets.
Suppose that $X$ and $X'$ are $G$-equivariantly diffeomorphic.
Then, the smooth structures on $X/G$ and $X'/G$ given by \textup{\lemref{smoothquo}} are
diffeomorphic.
\end{Lemma}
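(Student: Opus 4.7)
My plan is to exploit the flexibility in the construction of \lemref{smoothquo}: the resulting smooth structure on $X/G$ does not depend, up to diffeomorphism, on the choice of $G$-invariant Riemannian metric. The strategy is to transport a metric via the given equivariant diffeomorphism so that $\bar h$ becomes smooth, and then invoke the metric-independence established in Step~2 of the proof of \lemref{smoothquo}.

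First, let $h\:X\to X'$ be a $G$-equivariant diffeomorphism. By \lemref{smoothquo}, pick any smooth structure $(X'/G)_{g'}$ on $X'/G$ coming from a $G$-invariant Riemannian metric $g'$ on $X'$, and define $g = h^*g'$, a $G$-invariant Riemannian metric on $X$. Let $(X/G)_g$ be the associated smooth structure. The underlying set-theoretic map $\bar h\:(X/G)_g \to (X'/G)_{g'}$ induced by $h$ is automatically a homeomorphism.

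Next, I will show that $\bar h$ is a diffeomorphism. On the complement of the branch locus, $\bar h$ is smooth because it is the quotient of a diffeomorphism of smooth $G$-coverings, which is a local diffeomorphism. Near the branch locus, the point is that $h$, being a $G$-equivariant isometry, maps the normal bundle of $X^G$ isometrically onto the normal bundle of $(X')^G$, inducing an isomorphism $\tilde h\: P\to P'$ of orthonormal frame bundles. Because the smooth structures on the quotients are defined through the commutative diagram~\eqref{E-smoothquo-20} built from the normal exponential maps, the equivariant bundle isomorphism $\tilde h$ induces smooth maps $P\times_{O(2)}D \to P'\times_{O(2)}D$ and $P\bar\times_{O(2)}D \to P'\bar\times_{O(2)}D$, and the naturality of the exponential map for isometries makes the two copies of~\eqref{E-smoothquo-20} fit into a single commuting cube whose vertical faces are $h$ and $\bar h$. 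Thus $\bar h$ is a diffeomorphism on a neighbourhood of $X^G/G$, and hence globally.

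Finally, by Step~2 of the proof of \lemref{smoothquo} (metric independence), the structure $(X/G)_g$ is diffeomorphic to the one obtained from any other $G$-invariant Riemannian metric, and similarly for $(X'/G)_{g'}$. Combining with the diffeomorphism $\bar h\:(X/G)_g \to (X'/G)_{g'}$, all the smooth structures of \lemref{smoothquo} on $X/G$ and $X'/G$ are mutually diffeomorphic. The only subtle point, which I regard as the main obstacle, is the verification that the normal exponential construction is natural under equivariant isometries; but this follows directly from the defining property of the exponential map together with the equivariance of $h$.
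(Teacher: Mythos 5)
Your proposal is correct and takes essentially the same route as the paper: transport a metric by the equivariant diffeomorphism $h$ so that $h$ becomes a $G$-equivariant isometry, observe that the Step~1 construction (via the normal exponential map) then makes $\bar h\:(X/G)_g\to (X'/G)_{g'}$ a diffeomorphism, and conclude by the metric-independence of Step~2. Your extra verification of the naturality of the exponential map under equivariant isometries just spells out what the paper leaves implicit.
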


\begin{proof}
Let $h\:X\to X'$ be a $G$-equivariant diffeomorphism and
$\bar h\:X/G\to X'/G$ be the induced homeomorphism. Let $g$ be a $G$-invariant
Riemannian metric on $X$ and let $g'=h_*g$ be the metric on $X'$ transported by $h$. 
With these metrics, $h$ is an isometry
and the construction of Step~1 in the proof of \lemref{smoothquo} implies
that $\bar h\: (X/G)_g \to  (X'/G)_{g'}$ is a diffeomorphism. The result then follows
from Step~2 in the proof of \lemref{smoothquo}. 
\end{proof}

As an application of the same ideas, we give the following ``descent" result for a smooth action of a compact Lie group $H$ on $X$ which commutes with the $G$-action. Note that such an action induces a topological $H$-action on
$X/G$.
\begin{Lemma}\label{smoothquo-equiv}
Let $X$ be a smooth  $G$-manifold such that  the fixed point set $X^G$ is
a closed manifold of codimension $2$. Suppose that $X$ is equipped with
a smooth action of a compact Lie group $H$ which
commutes
with the $G$-action.
Then, there exists a smooth $H$-manifold $M$ and a
$H$-equivariant homeomorphism
$h\:X/G\to M$ such that the composed map $X\onto X/G \fl{h} M$ is a
branched covering
with branched locus the image of $X^G$. If $X$ is closed, the manifold $M$
is unique up to $H$-equivariant diffeomorphism.
\end{Lemma}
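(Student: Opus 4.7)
The plan is to repeat the construction of \lemref{smoothquo} with all auxiliary data chosen to be $H$-equivariant in addition to being $G$-invariant. The key input is that because $H$ is compact and its action commutes with that of $G$, averaging over $H$ any $G$-invariant Riemannian metric on $X$ produces a Riemannian metric $g$ that is simultaneously $G$-invariant and $H$-invariant. Since the two actions commute, $H$ preserves $X^{G}$, and the normal bundle $\nu$ of $X^{G}$ (with respect to $g$) is an $H$-equivariant $O(2)$-bundle; the associated principal orthonormal frame bundle $P\to X^{G}$ is therefore an $H$-space whose $H$-action commutes with the $O(2)$-action on fibers.

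I would then run Step~1 of the proof of \lemref{smoothquo} equivariantly. Scaling $g$ (which does not affect the $H$-invariance) to make it calibrated around $X^{G}$, the normal exponential map becomes an $(H\times G)$-equivariant diffeomorphism
\[
\varphi\: P\times_{O(2)} D \stackrel{\approx}{\longrightarrow} V
\]
onto a $G$-invariant and $H$-invariant tubular neighborhood $V$ of $X^{G}$. The map $q$ of \eqref{E-bran} is $H$-equivariant for the obvious $H$-action on $P\bar\times_{O(2)} D$, so the commutative square \eqref{E-smoothquo-20} endows $V/G$ with a smooth $H$-manifold structure compatible with the projection $p$. Because $H$ acts freely of $G$ on $X_{free}$, the covering smooth structure on $X_{free}/G$ is automatically $H$-invariant, and it agrees with the one on $V/G$ on their overlap. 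Gluing these yields a smooth $H$-manifold $M$, an $H$-equivariant homeomorphism $h\:X/G\to M$, and a factorization of $X\onto X/G\stackrel{h}{\to}M$ as a branched covering with branch locus the image of $X^{G}$, as required.

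For uniqueness when $X$ is closed, I would follow Step~2 of \lemref{smoothquo}. Given two such smooth $H$-manifolds $M=(X/G)_{g}$ and $M'=(X/G)_{g'}$ produced from $(G\times H)$-invariant metrics $g,g'$, set $L=X\times[0,1]$ with the product $G$-action and with $H$ acting trivially on $[0,1]$; the linear interpolation $\check g = tg'+(1-t)g$ is $(G\times H)$-invariant, and after scaling it is calibrated around $L^{G}=X^{G}\times[0,1]$. Applying the equivariant Step~1 to $L$ produces a smooth $H$-manifold $(L/G)_{\check g}$ together with an $H$-equivariant submersion $b\:(L/G)_{\check g}\to[0,1]$ whose fibers over $0$ and $1$ are $M$ and $M'$ respectively. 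Choosing a gradient-like vector field for $b$ and averaging it over the compact group $H$ yields an $H$-invariant gradient-like vector field; its flow integrates to an $H$-equivariant diffeomorphism $M\stackrel{\approx}{\to}M'$.

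The main obstacle, which is mild, is ensuring that \emph{every} auxiliary choice in the argument of \lemref{smoothquo} can be made $H$-equivariantly; this is where compactness of $H$ is used essentially, both for averaging the metric at the start and for averaging the gradient-like vector field at the end. Once these two $H$-invariant choices are available, the functoriality statements in diagrams \eqref{E-smoothquo-20} and \eqref{E-smoothquo-40} give $H$-equivariance of all intermediate maps for free, and the proof of \lemref{smoothquo} goes through verbatim in the category of smooth $H$-manifolds and $H$-equivariant smooth maps.
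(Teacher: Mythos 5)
Your proposal is correct and follows exactly the paper's (one-line) proof: choose a Riemannian metric invariant under both $G$ and the compact group $H$ and rerun the construction of \lemref{smoothquo}; you simply spell out the details (averaging the metric, $H$-equivariance of the normal exponential map and of diagram~\eqref{E-smoothquo-20}, and averaging the gradient-like vector field in the uniqueness step) that the paper leaves implicit.
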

\begin{proof}
We use an $H$-invariant Riemannian metric $g$ on $X$ and apply \lemref{smoothquo} again.
\end{proof}
\begin{example}\label{classic2}
One of the classical conjugation $4$-manifolds is $S^2\times S^2$ with involution given by complex conjugation on each factor.  
Note that complex conjugation on $S^2$ may be expressed as the reflection $(x,y,z) \mapsto (x, y, -z)$, and this involution commutes with the standard $S^1$-action given by rotation in the $xy$-plane. Therefore the quotient $S^2\times S^2/G$ inherits an effective $T^2$-action, which is smooth with respect to the smooth structure provided by  \lemref{smoothquo-equiv}. However, Orlik \cite{orlik1} applied the classification of smooth $T^2$-actions by Orlik and Raymond \cite{orlik-raymond1} to show that a smooth homotopy $4$-sphere with an effective smooth $T^2$-action must be the standard $S^4$.
\end{example}

\begin{ccote}\textbf{Lifted structure}.
We now consider the opposite problem: to show that a smooth structure on the quotient of a $2$-fold branched covering induces a canonical smooth $G$-action on the total space.

\begin{Lemma}[Smooth $G$-action]\label{brcov2invo}
Let $p\: (X,Y) \to (M,N)$ be a branched $2$-fold covering with branched locus $N$,
where $M$ is a smooth closed manifold, and $N$ is a smooth closed submanifold of codimension 
$2$ in $M$.
Then $X$ admits a smooth $G$-action with $X^G=Y$ such that the smooth structure
on $X/G$ given by \textup{\lemref{smoothquo}} is diffeomorphic to $M$. 
\end{Lemma}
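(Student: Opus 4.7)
The plan is to patch together two natural definitions of a smooth involution $\tau$ on $X$: the unique non-trivial deck transformation on $X-Y$, and the explicit involution $(a,z)\mapsto(a,-z)$ on the local model near $Y$. I will then show the induced map $X/G\to M$ is smooth by appealing to the uniqueness part of \lemref{smoothquo}.

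First, on $X-Y$, the map $p\:X-Y\to M-N$ is a smooth $2$-fold covering by Definition~\ref{def-bran}, so it admits a unique non-trivial deck transformation $\tau_0\:X-Y\to X-Y$, which is a smooth fixed-point-free involution satisfying $p\pcirc\tau_0=p$. Next, by the third bullet of Definition~\ref{def-bran}, there exist tubular neighborhoods $Y\subset\tilde V\subset X$ and $N\subset V\subset M$, and diffeomorphisms $\varphi$ and $\bar\varphi$ fitting into a commutative square identifying $p|\tilde V\to V$ with the local model $q\:P\times_{O(2)} D\to P\bar\times_{O(2)} D$. The formula $(a,z)\mapsto(a,-z)$ descends to a well-defined smooth involution on $P\times_{O(2)} D$ (since $-1\in O(2)$ commutes with the $O(2)$-action on $D$), with fixed set the zero section $P/O(2)$, and it realizes $q$ as the topological quotient by $G$ (a continuous bijection of compact Hausdorff spaces). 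Transporting by $\varphi$ produces a smooth involution $\tau_1$ on $\tilde V$ with fixed set $Y$ and satisfying $p\pcirc\tau_1=p|\tilde V$.

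Second, on the overlap $(X-Y)\cap\mathrm{int}(\tilde V)$, both $\tau_0$ and $\tau_1$ are non-identity elements in the fiber of the $2$-fold covering $p$, so they must coincide. Hence they glue to a smooth involution $\tau\:X\to X$ satisfying $X^G=Y$ and $p\pcirc\tau=p$. Since $Y$ has codimension $2$, this $G$-action satisfies the hypotheses of \lemref{smoothquo}.

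Finally, because $p\pcirc\tau=p$, the map $p$ factors as $p=\bar p\pcirc\pi$, where $\pi\:X\to X/G$ is the orbit map, and $\bar p\:X/G\to M$ is a continuous bijection (the $G$-orbits are exactly the fibers of $p$). The original smooth structure on $M$ thus pulls back via $\bar p$ to a smooth structure on $X/G$ for which $\pi\:X\to X/G$ is a branched $2$-fold covering with branched locus $Y$ — indeed, the local model around $Y$ is exactly the square $\varphi,\bar\varphi,q,p$ provided by Definition~\ref{def-bran}. Therefore this structure satisfies condition (1) of \lemref{smoothquo}, and by the uniqueness statement (2) of that lemma, it is diffeomorphic to the smooth structure on $X/G$ constructed there. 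The main point to check carefully is the matching of $\tau_0$ and $\tau_1$ on the overlap, which is forced by the uniqueness of the non-trivial deck transformation of a $2$-fold covering; once that is in hand, smoothness of $\tau$ at $Y$ is immediate from the explicit linear local model.
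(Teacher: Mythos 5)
Your proposal is correct and follows essentially the same route as the paper: take the deck transformation on $X-Y$, the involution $(a,z)\mapsto(a,-z)$ in the local model near $Y$, glue them (they agree since each sends a point to the other point of its fibre), and then invoke the uniqueness statement (2) of \lemref{smoothquo} after noting that the induced map $X/G\to M$ is a homeomorphism. The only detail worth making explicit is that $X$ is compact because $M$ is closed (so the continuous bijection $\bar p$ is indeed a homeomorphism and part (2) of \lemref{smoothquo} applies), which is exactly the remark the paper makes at the corresponding point.
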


\begin{proof}
The involution on $X-Y$ is the deck transformation of the covering $X-Y\to M-N$. This is smooth with respect to the induced smooth structure on $X-Y$ from the covering.
Around $Y$, the map $p$ is modelled by~\eqref{E-bran}, and we obtain a smooth structure on $X$. The deck transformation
of $q\: P\times_{O(2)} D^\ddstar \to P\bar \times_{O(2)} D^\ddstar$ is given by 
$(a,z)\mapsto (a,-z)$. It extends to a smooth $G$-action on $X$ with $X^G=Y$.  

If $M$ is a closed manifold, so is $X$. One has a commutative diagram
$$
\begin{array}{c}{\xymatrix@C-3pt@M+2pt@R-4pt{%
 & X \ar@{>>}[dl]_(0.50){\pi}
\ar[dr]^(0.50){q}  
\\
X/G \ar[rr]^(0.50){h}_(0.50){\approx}  &&
M
}}\end{array} \ .
$$ 
Since $h$ is a continuous bijection between compact spaces, $h$ is a homeomorphism.
Hence, $M$ is a smooth structure on $X/G$ satisfying~(1) of \lemref{smoothquo}.
The result follows from part~(2) of the same lemma.
\end{proof}

A closed tubular neighbourhood of a codimension $2$ submanifold $N \subset M$ will be called a $D$-tube (since it is diffeomorphic to a smooth fibre bundle with fibre $D$). The next result is our version of Durfee and Kauffman \cite[Prop.~1.1]{durfee-kauffman1}.

\begin{Lemma}[Existence]\label{P.Exibran}
Let $M$ be a smooth closed manifold, and let $N$ be a smooth closed submanifold of codimension 
$2$ in $M$. Let $\dot p\:\dot X\to M-N$ be a smooth $2$-fold covering. Suppose that, for a $D$-tube around $N$, 
the preimage by $\dot p$ of each $D^\ddstar$-fiber is connected. 
Then
\begin{enumerate}
\item The covering $\dot p$ extends to a smooth $2$-fold branched covering $p\:X\to M$
with branched locus $N$. 

\item If $p\:X\to M$ and $p'\:X'\to M$ are two such branched coverings,
then $X'$ is $G$-diffeomorphic to $X$ (for the smooth structures and $G$-action defined in \textup{\lemref{brcov2invo}}). 
\end{enumerate}

\end{Lemma}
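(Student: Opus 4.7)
The plan for existence~(1) is to build $X$ by adjoining to $\dot X$ a model branched cover over a tube around $N$, with the model determined by a principal $O(2)$-bundle $P\to N$ extracted from the transition data of $\dot p$. First, fix a closed $D$-tube $V$ around $N$ and identify it with $Q\times_{O(2)} D$, where $Q\to N$ is the $O(2)$-frame bundle of the normal bundle of $N$. Over a good cover $\{U_\alpha\}$ of $N$ with transition cocycle $\{g_{\alpha\beta}\}$ for $Q$, the restriction of $\dot p$ to $U_\alpha\times D^\ddstar$ is a smooth $2$-fold covering that is connected on each fiber by hypothesis; since fiber-connected $2$-fold covers of $U_\alpha\times D^\ddstar$ (with $U_\alpha$ contractible) are classified by the nontrivial element of $H^1(U_\alpha\times D^\ddstar;\bbz_2)=\bbz_2$, this cover is smoothly isomorphic to the standard one $(u,w)\mapsto(u,w^2)$. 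The change-of-trivialization on each overlap is therefore forced to have the form $(u,w)\mapsto(u,h_{\alpha\beta}(u)w)$ with $\psi(h_{\alpha\beta})=g_{\alpha\beta}$ (for $\psi$ as in~\eqref{E-defpsi}); because $\dot p$ is a globally defined cover, $\{h_{\alpha\beta}\}$ automatically satisfies the $O(2)$-cocycle condition on triple overlaps.

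Let $P\to N$ be the principal $O(2)$-bundle defined by $\{h_{\alpha\beta}\}$. The identities $\psi\circ h_{\alpha\beta}=g_{\alpha\beta}$ give a canonical identification $V=P\bar\times_{O(2)} D$. Set $\tilde V:=P\times_{O(2)} D$, carrying its canonical smooth structure and smooth $G$-action fixing $N$, and consider the smooth map $q\colon\tilde V\to V$ of~\eqref{E-bran}. By construction, the restriction $q|_{\tilde V\setminus N}$ is canonically isomorphic as a smooth $2$-fold covering to $\dot p^{-1}(V\setminus N)\to V\setminus N$. Define
$$X\;:=\;\dot X\cup_{\dot p^{-1}(V\setminus N)}\tilde V,$$
glued along this canonical isomorphism, and set $p$ equal to $\dot p$ on $\dot X$ and to $q$ on $\tilde V$. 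The smooth structures on the two pieces agree on the overlap, so $X$ is a smooth manifold; the map $p$ has the local form of~\eqref{E-bran} near $N$, so $p$ is a smooth $2$-fold branched covering with branched locus $N$.

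For uniqueness~(2), given two such extensions $p\colon X\to M$ and $p'\colon X'\to M$, define $\Phi\colon X\to X'$ to be the identity on the unbranched parts (both canonically identified with $\dot X$) and $(p'|_{Y'})^{-1}\circ p|_Y$ on the branched loci; continuity across $Y$ follows from the local form~\eqref{E-bran}, and $G$-equivariance is automatic. For smoothness, apply the existence construction to each of $p$ and $p'$ to obtain principal $O(2)$-bundles $P$ and $P'$ on $N$; their cocycles are determined by the same local trivializations of $\dot p$ up to coboundary, giving a canonical isomorphism $P\cong P'$. This induces a smooth $G$-equivariant identification $\tilde V\cong \tilde V'$ which coincides with $\Phi$ on the overlap, so $\Phi$ is a smooth $G$-diffeomorphism.

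The main obstacle is the sign-compatibility issue in the construction of $\{h_{\alpha\beta}\}$: the lift of $g_{\alpha\beta}$ along $\psi$ is a priori only determined up to $\pm I$, and the sign choices could in principle fail to close up on triple overlaps. The connectedness hypothesis is crucial precisely because it pins down the fiberwise form of the cover to the squaring map, reducing the choice of each local trivialization to an element of the deck group $\bbz_2$; the fact that $\dot p$ is a single globally defined smooth covering then forces the cocycle identity automatically. Changing the local trivializations by deck transformations alters $\{h_{\alpha\beta}\}$ only by a coboundary, so the isomorphism class of $P$ is well-defined, yielding both the existence of $X$ and the canonical isomorphism needed for uniqueness.
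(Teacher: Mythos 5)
Your argument for part~(1) is essentially correct, and it is a legitimate variant of the paper's construction: where the paper pulls the covering back to the orthonormal frame bundle $Q$ of the normal bundle, identifies the fibrewise cover with the squaring map by integrating a lifted rotation vector field, and produces the double cover $\tilde Q\to Q$ with $Q\approx\tilde Q\,\bar\times_{O(2)}O(2)$ before gluing in $\tilde Q\times_{O(2)}D$, you produce the same ``square root along $\psi$'' bundle by a \v{C}ech cocycle $\{h_{\alpha\beta}\}$ with $\psi(h_{\alpha\beta})=g_{\alpha\beta}$. This is the same key object reached by local trivializations instead of the global pullback, and it has the small advantage of avoiding the paper's separate treatment of the orientable ($SO(2)$) and non-orientable cases; the only point to spell out is why the transition maps are forced to be linear in the fibre coordinate (two lifts of the same map on the connected set $U_{\alpha\beta}\times D^\ddstar$ differ by the deck involution), which is the argument you sketch.

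Part~(2), however, has a genuine gap. The map $\Phi$ you define (the identity on $\dot X$, the canonical identification on the branch loci) is a $G$-homeomorphism over ${\rm id}_M$, but it need not be smooth, and the attempted repair via $P\cong P'$ cannot work: Definition~\ref{def-bran} only requires the model form~\eqref{E-bran} \emph{up to arbitrary diffeomorphisms} of tubes, so the model identifications near $Y\subset X$ and $Y'\subset X'$ are non-canonical and there is no reason the induced identification $\tilde V\cong\tilde V'$ ``coincides with $\Phi$ on the overlap.'' Concretely, in the local model $w\mapsto w^2$ on $D$, the lift of the downstairs diffeomorphism $z\mapsto z+\epsilon\bar z$ is $c(w)=w\,(1+\epsilon\,\bar w^{2}/w^{2})^{1/2}$, which is a homeomorphism germ, smooth and a local diffeomorphism away from $0$, but \emph{not differentiable} at $0$ (on the circle it contains the Fourier mode $e^{-3i\theta}$, which no real-linear map can produce). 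Transporting the smooth structure of a valid extension $X$ near a branch point by such a lift yields a second valid extension $X'$ of the very same $\dot p$ for which your $\Phi$ fails to be smooth along $Y$. So the two extensions are $G$-diffeomorphic only by a diffeomorphism that moves points of $\dot X$, not by one over ${\rm id}_M$; this is exactly the non-functoriality phenomenon of \remref{R.nonfunct} and \cite[\S~1]{durfee-kauffman1}. The paper handles this by showing that every structure satisfying Definition~\ref{def-bran} arises from a Riemannian metric on $M$ and then comparing two metrics by interpolation and integration of a gradient-like vector field, as in Steps~2 and~3 of the proof of \lemref{smoothquo}; some argument of that kind is what is missing from your part~(2).
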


\begin{proof}
Choose a Riemannian metric $\bar g$ on $M$. 
This associates a smooth principal $O(2)$-bundle $Q$ to the normal bundle to $N$.
We may suppose that $\bar g$ is calibrated around $N$, meaning that the exponential
map defines a smooth embedding $\bar\varphi\: Q\times_{O(2)} D \to M$. 
Denote by $\dot p\: L\to Q\times_{O(2)} D^\ddstar$ the $2$-fold covering induced from $\dot p\:\dot X\to M-N$
by the embedding $\bar\varphi$. Consider the pull-back diagram
\eqncount
\begin{equation}\label{P.Exibran-eq10}
\begin{array}{c}{\xymatrix@C-3pt@M+2pt@R-4pt{%
\check L \ar[d]^(0.50){\check p}
\ar[r]  &
L \ar[d]^(0.50){\dot p}  \\
Q\times D^\ddstar \ar[r] &
Q\times_{O(2)} D^\ddstar
}}\end{array} .
\end{equation}
and denote by $\tilde D^\ddstar \to D^\ddstar$ the map $z\mapsto z^2$ from $D^\ddstar$ to itself.
Choosing a point $a\in Q$ gives a base point $(a,1)\in Q\times D^\ddstar$, and we let
let $\check D^\ddstar=\check p^\mun(\{a\}\times D^\ddstar)$. 

The rotation vector
field $\xi$ on $D^\ddstar$, defined by $\xi_z=(z,iz)\in D^\ddstar\times \bbc \approx TD^\ddstar$,
lifts to a smooth vector field $\check\xi$ on $\check D^\ddstar$, which is complete
since $\xi$ is. Consider the radius path given by the inclusion $\rho\:(0,1]\to D^\ddstar$.
Choose a lifting $\check\rho\:(0,1]\to \check D^\ddstar$ and  
integrate $\check\xi$ with these initial conditions. 
By our assumption on $D^\ddstar$-fibers, this will produce a $G$-diffeomorphism
$\tilde D^\ddstar\fl{\approx}\check D^\ddstar$ over the identity of $D^\ddstar$.

Over the slice $Q\times\{1\}$, the map 
$\check p$ is a $2$-fold covering $\tilde Q\to Q$. 
We deduce that there is a $G$-diffeomorphism 
$\check\beta\:\tilde Q \times_G \tilde D^\ddstar \fl{\approx} \check L$
over the identity of $Q\times D^\ddstar$.

Let $a\in Q$ and let $\tilde a\in \check p^\mun(a)$. 
Because of diagram~\eqref{P.Exibran-eq10} and  the relation 
$(a\alpha,z)\sim (a,\alpha z)$ in the definition of $Q\times_{O(2)} D^\ddstar$,
there is a commutative diagram
\eqncount
\begin{equation}\label{P.Exibran-eq40}
\begin{array}{c}{\xymatrix@C-3pt@M+2pt@R-4pt{%
O(2) \ar[d]^(0.50){\psi}
\ar[r]^(0.34){\approx}  &
\check p^\mun(a\cdot O(2)) \ar[d]^(0.50){\check p}  \\
O(2) \ar[r]^(0.50){\approx}  &
a\cdot O(2)
}}\end{array} 
\end{equation}
where $\psi$ is the epimorphism defined in \eqref{E-defpsi} (this is a $2$-fold covering).
We deduce that $\tilde Q$ is a smooth principal $O(2)$-bundle and that
$Q \approx \tilde Q\bar\times_{O(2)} O(2)$. 
Hence
$$
Q\times_{O(2)} D^\ddstar \approx  [\tilde Q\bar\times_{O(2)} O(2)]\times_{O(2)} D^\ddstar
\approx  \widetilde Q\bar\times_{O(2)} D^\ddstar \, .
$$
Suppose first that the normal bundle $\nu$  to $N$ is not orientable. 
We claim that there is a $G$-diffeomorphism 
$\beta\:\tilde Q \times_{O(2)} \tilde D^\ddstar\to L$ making the following diagram
commutative:
$$
\begin{array}{c}{\xymatrix@C-3pt@M+2pt@R-4pt{%
\tilde Q \times_G \tilde D^\ddstar \ar[dr]^(0.55){\check\beta}_(0.57){\approx}  
 \ar@<-1mm>[ddr]
\ar[rrr]  &&&
\tilde Q \times_{O(2)} \tilde D^\ddstar \ar@{.>}[dl]_(0.55){\beta}^(0.55){\approx}
\ar[dd]
\\
& \check L \ar[d]^(0.50){\check p}
\ar[r]  &
 L \ar[d]^(0.50){\dot p}  \\
& Q\times D^\ddstar \ar[r] &
Q\times_{O(2)} D^\ddstar   & 
\tilde Q\, \bar\times_{O(2)} D^\ddstar \ar[l]_(0.46){\approx}
}}\end{array} .
$$
Indeed, if $\nu$ is non-orientable, then $Q$ is connected. 
As $\tilde D$ is connected, we deduce from  diagram~\eqref{P.Exibran-eq40}
that $\check L$ is connected. Hence, any orbit for the diagonal $O(2)$-action
on $\tilde Q\times_G\tilde D^\ddstar$ goes to a single point in $L$. This guarantees
that $\check\beta$ descends to $\beta$. With these constructions,
the covering projection $\dot p\:\dot X\to M-N$ now extends to a smooth branched
covering $p\:X\to M$ where
\eqncount
\begin{equation}\label{P.Exibran-eq50}
 X= \dot X \cup_\beta \tilde Q \times_{O(2)} D  \, .
\end{equation}
This proves the existence of $p\:X\to M$ when $\nu$ is non-orientable. 
in the other case, we do the whole proof above, replacing $O(2)$ by 
the connected group $SO(2)$. 

For the uniqueness statement of \lemref{P.Exibran}, observe that the
smooth structure on $X$ given by the decomposition~\eqref{P.Exibran-eq50}
is associated to the Riemannian metric $\bar g$ on $M$. A proof of the
uniqueness statement of \lemref{P.Exibran} may thus be obtained in a process analogous to
Steps~2 and~3 of the proof of \lemref{smoothquo} (see also the uniqueness statement
in \cite[Prop.~1.1]{durfee-kauffman1} and its proof).
\end{proof}
\end{ccote}
\begin{remark}\label{R.Exibran}
Suppose that, in \lemref{P.Exibran}, $N$ is connected and let $V$ be a $D$-tube around $N$.
By the homotopy exact sequence of the bundle $D^\ddstar\to V-N \to N$,
the condition on the $D$-fibers is equivalent to $\dot p^\mun(V-N)$ being connected. 
If this is not the case, the proof of \lemref{P.Exibran} shows that
$\dot p$ extends to an unbranched $2$-fold covering $X\to M$. 
\end{remark}

As in \remref{R.nonfunct}, smooth branched coverings are not functorial,
see \cite[\S~1]{durfee-kauffman1}. However, as in \lemref{smoothquoUNI}, one has
the following uniqueness result.

\begin{Lemma}[Uniqueness]\label{L.Unibran}
Let $(M,N)$ and $(M',N')$ be two manifold pairs, where $M$ and $M'$ are closed
and $N,N'$ are closed submanifolds of codimension $2$. Suppose that 
there is a diffeomorphism $h\:(M,N)\to (M',N')$. Then, the smooth branched
coverings over $M$ and and $M'$, with branched locus $N$ and $N'$, are diffeomorphic. 
\end{Lemma}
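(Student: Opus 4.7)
The plan is to transport the branched covering $p'\colon X'\to M'$ back to $M$ via the diffeomorphism $h$, and then invoke the uniqueness statement of \lemref{P.Exibran}.

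Concretely, I would form the composite $p'':=h^{-1}\circ p'\colon X'\to M$. Since $h$ is a diffeomorphism of pairs with $h(N)=N'$, this map sends $Y'$ onto $N$, its restriction to $X'\setminus Y'$ is a smooth $2$-fold covering onto $M\setminus N$ (being the composition of a smooth covering with a diffeomorphism of the base), and its restriction to $Y'$ is a diffeomorphism onto $N$. For the local model condition in Definition~\ref{def-bran}, observe that the $G$-invariant tubular neighborhood $\tilde V'$ of $Y'$ in $X'$ provided by the branched covering structure of $p'$ is unchanged, while its image $h^{-1}(V')$ is a tubular neighborhood of $N$ in $M$ (since $h$ is a smooth diffeomorphism of pairs, it carries $D$-tubes around $N'$ to $D$-tubes around $N$). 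Postcomposition of the model~\eqref{E-bran} by the base diffeomorphism $h^{-1}$ preserves its form, so $p''\colon X'\to M$ is itself a smooth $2$-fold branched covering with branched locus $N$.

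Now $p\colon X\to M$ and $p''\colon X'\to M$ are both smooth $2$-fold branched coverings of $M$ with the same branched locus $N$. In the applications of this lemma---namely to \ztk s as in the proof of Theorem A, where $H^1(M\setminus N;\bbz_2)=\bbz_2$ (see \lemref{knot2con}) singles out a unique non-trivial $2$-fold covering of the knot complement---the coverings $p|_{X\setminus Y}$ and $p''|_{X'\setminus Y'}$ are equivalent as $2$-fold coverings of $M\setminus N$. Therefore, by the uniqueness statement \lemref{P.Exibran}(2), $X$ is $G$-diffeomorphic to $X'$, and in particular the two smooth manifolds are diffeomorphic.

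The main thing to check is that the local model~\eqref{E-bran} for $p'$ transports verbatim to a local model for $p''$ under $h^{-1}$. This is the only place where the smooth structure near the branched locus matters, and it reduces to the uniqueness of smooth tubular neighborhoods of a codimension-$2$ submanifold, in the same spirit as Steps~2 and~3 of the proof of \lemref{smoothquo}.
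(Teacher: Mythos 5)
Your argument is essentially the paper's own proof: the paper forms the pull-back $h^*X'\to M$ (the same construction as your $p''=h^{-1}\circ p'$ up to canonical identification), notes it is a smooth branched covering over $M$ with branched locus $N$ diffeomorphic to $X'$, and concludes by the uniqueness part of \lemref{P.Exibran}. Your additional remark that the two coverings restrict to equivalent $2$-fold coverings of $M\setminus N$ (automatic in the intended application since $H^1(M\setminus N;\bbz_2)=\bbz_2$) is a point the paper leaves implicit, so the proposal is correct and follows the same route.
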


\begin{proof}
Let $X\to M$ and $X'\to M'$ be two such smooth branched coverings.
Then the pull-back $h^*X'\to M$ is a smooth branched covering over $M$,
with branched locus $N$, obviously diffeomorphic to $X'$. By \lemref{P.Exibran},
$h^*X'$ is diffeomorphic to $X$.
\end{proof}

\providecommand{\bysame}{\leavevmode\hbox to3em{\hrulefill}\thinspace}
\providecommand{\MR}{\relax\ifhmode\unskip\space\fi MR }
\providecommand{\MRhref}[2]{%
  \href{http://www.ams.org/mathscinet-getitem?mr=#1}{#2}
}
\providecommand{\href}[2]{#2}

\end{document}